
\documentclass[12pt,twoside]{amsart}
\usepackage[margin=3cm]{geometry}
\usepackage[colorlinks=false]{hyperref}
\usepackage[english]{babel}
\usepackage{graphicx,titling}
\usepackage{float}
\usepackage{amsmath,amsfonts,amssymb,amsthm}
\usepackage{lipsum}
\usepackage[T1]{fontenc}
\usepackage{fourier}
\usepackage{color}
\usepackage[latin1]{inputenc}
\usepackage{esint}
\usepackage{caption}
\usepackage{ccicons}

\makeatletter
\def\blfootnote{\xdef\@thefnmark{}\@footnotetext}
\makeatother

\newcommand\ccnote{
    \blfootnote{\copyright\,\, Otis Chodosh, Nick Edelen, and Chao Li}
    \blfootnote{\ccLogo\, \ccAttribution\,\, Licensed under a \href{https://creativecommons.org/licenses/by/4.0/}{Creative Commons Attribution License (CC-BY)}.}
}

\usepackage[export]{adjustbox}
\numberwithin{equation}{section}
\usepackage{setspace}\setstretch{1.05}
\renewcommand{\le}{\leqslant}
\renewcommand{\leq}{\leqslant}
\renewcommand{\ge}{\geqslant}
\renewcommand{\geq}{\geqslant}
\renewcommand{\mathbb}{\varmathbb}
\usepackage{fancyhdr}
\pagestyle{fancy}
\fancyhf{}

\newtheorem{theo}{Theorem}[section]
\newtheorem{lemm}[theo]{Lemma}

\newtheorem{prop}[theo]{Proposition}
\newtheorem{rema}[theo]{Remark}
\fancyhead[LE,RO]{\thepage}

\fancyhead[RE]{O. Chodosh, N. Edelen \& C. Li}
\fancyhead[LO]{Improved regularity for minimizing capillary hypersurfaces}


\usepackage{mathrsfs}
\usepackage{comment}




\newcommand{\RR}{\mathbf{R}}
\newcommand{\R}{\mathbf{R}}

\newcommand{\cA}{\mathcal A}

\newcommand{\cH}{\mathcal H}




\newcommand{\id}{\text{id}}

\DeclareMathOperator{\graph}{graph}

\DeclareMathOperator{\spt}{spt}

\DeclareMathOperator{\Lip}{Lip}

\DeclareMathOperator{\leb}{\mathcal{L}}

\newcommand{\del}{\partial}
\DeclareMathOperator{\Vol}{Vol}

\DeclareMathOperator{\Ric}{Ric}

\DeclareMathOperator{\Div}{div}

\DeclareMathOperator{\sing}{sing}


\newcommand{\pa}[2]{\frac{\partial #1}{\partial #2}}

\newcommand{\eps}{\varepsilon}

\address{Otis Chodosh, Department of Mathematics, Stanford University, Building 380, Stanford, CA 94305, USA}
\email{ochodosh@stanford.edu}
\medskip
\address{Nick Edelen, Department of Mathematics, University of Notre Dame, Notre Dame, IN 46556 USA} 
\email{nedelen@nd.edu}
\medskip
\address{Chao Li, Courant Institute, New York University, 251 Mercer St, New York, NY 10012, USA}
\email{chaoli@nyu.edu}


\begin{document}

\thispagestyle{empty}

\begin{minipage}{0.28\textwidth}
\begin{figure}[H]
\includegraphics[width=2.5cm,height=2.5cm,left]{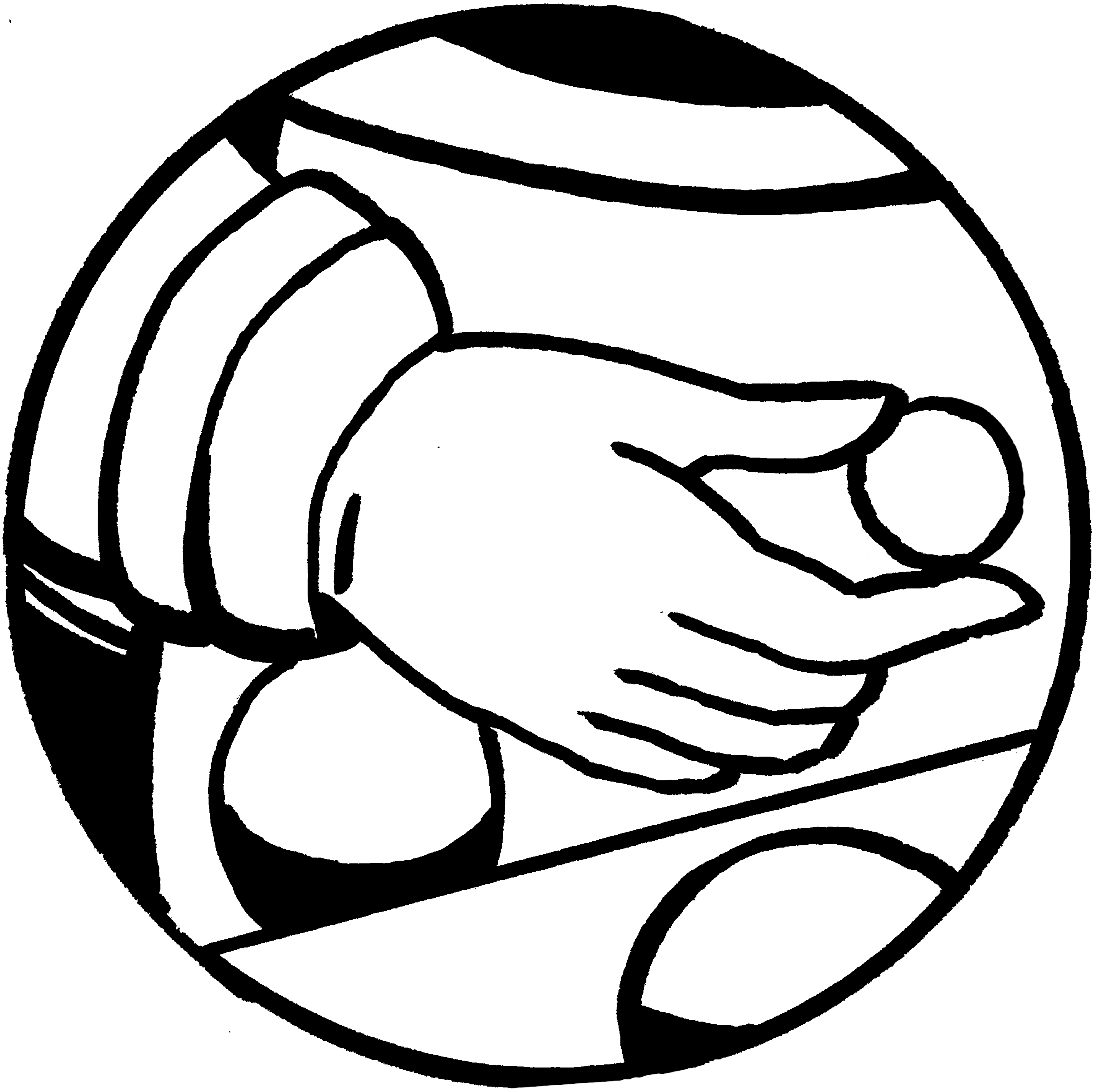}
\end{figure}
\end{minipage}
\begin{minipage}{0.7\textwidth} 
\begin{flushright}
Ars Inveniendi Analytica (2025), Paper No. 2, 27 pp.
\\
DOI 10.15781/axnt-ww91
\\
ISSN: 2769-8505
\end{flushright}
\end{minipage}

\ccnote

\vspace{1cm}


\begin{center}
\begin{huge}
\textit{Improved regularity for minimizing capillary hypersurfaces}


\end{huge}
\end{center}

\vspace{1cm}


\begin{minipage}[t]{.28\textwidth}
\begin{center}
{\large{\bf{Otis Chodosh}}} \\
\vskip0.15cm
\footnotesize{Stanford University}
\end{center}
\end{minipage}
\hfill
\noindent
\begin{minipage}[t]{.28\textwidth}
\begin{center}
{\large{\bf{Nick Edelen}}} \\
\vskip0.15cm
\footnotesize{University of Notre Dame}
\end{center}
\end{minipage}
\hfill
\noindent
\begin{minipage}[t]{.28\textwidth}
\begin{center}
{\large{\bf{Chao Li}}} \\
\vskip0.15cm
\footnotesize{Courant Institute \\ New York University} 
\end{center}
\end{minipage}

\vspace{1cm}


\begin{center}
\noindent \em{Communicated by Guido De Philippis}
\end{center}
\vspace{1cm}


\noindent \textbf{Abstract.} \textit{We  give improved estimates for the size of the singular set of  minimizing capillary hypersurfaces: the singular set is always of codimension at least $4$, and this estimate improves if the capillary angle is close to $0$, $\frac{\pi}{2}$, or $\pi$.  For capillary angles that are close to $0$ or $\pi$, our analysis is based on a rigorous connection between the capillary problem and the one-phase Bernoulli problem.}
\vskip0.3cm

\noindent \textbf{Keywords.} Capillary hypersurfaces, Regularity, One-phase Bernoulli problem.
\vspace{0.5cm}


\section{Introduction}

Given a smooth Riemannian $(n+1)$-manifold with boundary $(X^{n+1},h)$, functions $\sigma\in C^1(\partial X)$ satisfying $\sigma\in (-1,1)$ and $g\in C^1(X)$, and an open subset $E\subset X$, the Gauss' free energy is
\begin{equation}\label{eq:capillary.functional}
\cA (E) = \cH^n( \partial^* E \cap \mathring X) - \int_{\partial^* E \cap \partial X}\sigma(x)d\cH^{n} + \int_E g(x)dx.
\end{equation}
Stationary points (subject to a volume contraint) to the Gauss' free energy model the equilibrium state of incompressible fluids. Indeed, letting $X$ be the container and $E$ the portion of the fluid, the three terms in \eqref{eq:capillary.functional} represent the free surface energy (proportional to area of the separating surface), the wetting energy (modeling the adhesion of the fluid with the wall of the container), and the gravitational energy, respectively. The interface, $M=\partial E\cap \mathring{X}$, is usually called a \textit{capillary surface}. Formally, if $E$ is a stationary point of $\cA$, then the capillary surface $M$ should have prescribed mean curvature $g$ and boundary contact angle $\cos^{-1}(\sigma)$. However, it is well-known that minimizers of $\cA$ may have a singular set. The main result of this paper gives improved bounds for the size of this singular set.

\begin{theo}\label{thm:main.regularity}
	There exist $\varepsilon_0, \varepsilon_1>0$ such that the following holds. Let $(X^{n+1},h)$ be a smooth manifold with boundary, $\sigma\in C^1(\partial X)$, $-1<\sigma<1$, and $g\in C^1(X)$. Suppose that $E\subset X$ is a Caccioppoli set that minimizes \eqref{eq:capillary.functional}. Then $M=\spt(\partial E\cap \mathring{X})$ is a smooth hypersurface of $M$ away from a closed set $\sing(M)$. 
	\begin{enumerate}
		\item We always have that $\dim (\sing(M)\cap \partial X)\le n-4$;
		\item Let $S_1 = \{x\in \partial X: \sigma(x)\in (-1,-1+\eps_0)\cup (1-\eps_0,1)\}$. Then
		\[\dim (\sing(M)\cap S_1)\le n-5.\]
		\item Let $S_2 = \{x\in \partial X: \sigma(x)\in (-\eps_1,\eps_1)\}$. Then
		\[\dim (\sing(M)\cap S_2)\le n-7.\]
	\end{enumerate}
\end{theo}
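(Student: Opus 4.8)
The plan is to run a dimension-reduction (Federer) argument adapted to the capillary setting, where the main work is analyzing the possible tangent cones at a boundary singular point and showing that the three regimes of $\sigma$ force increasingly strong restrictions on the lowest dimension in which a genuinely singular minimizing cone can occur. First I would set up the standard monotonicity formula for the capillary functional $\cA$ at boundary points: after passing to the half-space model and blowing up, a minimizer $E$ has tangent objects that are minimizing cones for the model capillary functional with constant angle $\cos^{-1}(\sigma(x_0))$ and vanishing potential $g$, the boundary being a hyperplane. The interior regularity theory (codimension $7$) handles $\sing(M)\cap\mathring X$, so everything reduces to understanding $\sing(M)\cap\partial X$. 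By Federer dimension reduction, it suffices to show: (i) in $\mathbf R^{n+1}_+$ with $n+1\le 3$ there is no singular minimizing capillary cone (giving codimension $\ge 4$ in general); (ii) when $\sigma$ is near $\pm 1$, there is no singular minimizing capillary cone in dimensions $\le 4$; (iii) when $\sigma$ is near $0$, none in dimensions $\le 6$.

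For part (1), I would use the known classification / regularity results for minimizing capillary cones in low dimensions — in $\mathbf R^3_+$ a minimizing capillary cone with a flat boundary wall must be a half-plane or a wedge meeting the wall at the contact angle, hence smooth; this should already be available from the work of Taylor and subsequent capillary regularity literature, so the new content is in (2) and (3). The key idea for (2) is the \emph{rigorous connection to the one-phase Bernoulli problem} advertised in the abstract: when $\sigma = -1+\delta$ with $\delta$ small, the wetting term $-\int \sigma\, d\cH^n$ nearly equals $+\cH^n(\partial E\cap\partial X)$ up to an order-$\delta$ correction, so after rescaling the energy $\cA(E) - \cH^n(\partial E\cap\partial X)$ behaves, in the blow-up limit, like the one-phase Bernoulli energy $\int|\nabla u|^2 + \Lambda\,|\{u>0\}|$ with the free boundary playing the role of $M$ and the fixed wall playing the role of the Bernoulli hyperplane. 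I would make this precise with a quantitative comparison (a $\Gamma$-convergence or a direct energy-comparison estimate with explicit $\delta$-dependent error) showing that a minimizing capillary cone with $\sigma$ near $-1$, if singular, would produce a singular minimizing one-phase free boundary cone of the same dimension; the Bernoulli regularity theory (Alt–Caffarelli, and the sharp dimension results of Caffarelli–Jerison–Kenig / De Silva–Jerison / Jerison–Savin) then forbids this below dimension $5$ for (2). The choice $\varepsilon_0$ is exactly the threshold making this comparison argument close.

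Part (3), $\sigma$ near $0$, I expect to be the main obstacle. Here the contact angle is near $\pi/2$, so the model problem is the \emph{two-sided} (sawtooth / reflection) picture: a minimizing capillary cone meeting the wall orthogonally can be reflected across $\partial X$ to produce an entire minimizing hypersurface in $\mathbf R^{n+1}$, whose singular set has dimension $\le n-7$ by Simons/Federer. When $\sigma$ is merely close to $0$ rather than exactly $0$ the reflection is only approximate, and the difficulty is controlling the defect: one must show that a would-be singular minimizing capillary cone with small $|\sigma|$ is, after blow-up, $C^{1}$-close to an honest reflected minimal cone, and then bootstrap — either via an $\varepsilon$-regularity / excess-decay statement for the capillary problem whose smallness threshold depends on $|\sigma|$, or by a compactness argument letting $\sigma\to 0$ and using upper semicontinuity of the singular dimension to contradict the strict inequality. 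The technical heart is an $\varepsilon$-regularity theorem: if a minimizing capillary boundary is close to a half-hyperplane meeting the wall and $|\sigma|<\varepsilon_1$, then it is smooth with estimates; combining this with the $n-7$ bound for the reflected minimal cones and dimension reduction yields (3). Throughout, the uniform constants $\varepsilon_0,\varepsilon_1$ come from the compactness steps, and the final dimension bounds are read off by counting how many dimensions the relevant model theory (Bernoulli for (2), minimal surfaces for (3)) gives before singular cones first appear.
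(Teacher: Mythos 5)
Your overall reduction (blow up at boundary points, classify minimizing capillary cones for the constant-angle model functional in a half-space, then apply Federer dimension reduction with the De Philippis--Maggi $\eps$-regularity) is indeed the paper's framework, but there are two genuine gaps. First, part (1) does not follow from the existing literature as you claim: Taylor's result gives $\dim(\sing M)\le n-3$, which corresponds exactly to the absence of singular cones in $\RR^{n+1}_+$ for $n\le 2$ (your ``$n+1\le 3$'' case). To improve this to $n-4$ one must classify minimizing capillary cones with an isolated singularity for $n=3$, i.e.\ in $\RR^4_+$, and this is new content. The paper proves it by combining the stability inequality on the spherical link $\Sigma=M\cap S^3$ (a Caffarelli--Hardt--Simon-type spectral bound), the Gauss equation and Gauss--Bonnet to force $\chi(\Sigma)>0$ so that $\Sigma$ is a disk, and then a Fraser--Schoen-type holomorphic quadratic differential argument using that the capillary boundary condition kills the off-diagonal second fundamental form terms on $\partial\Sigma$. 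Your proposal both omits this step entirely and miscounts the dimension it would buy.

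Second, in parts (2) and (3) your contradiction scheme assumes that a singular capillary cone would ``produce a singular minimizing one-phase free boundary cone of the same dimension'' (resp.\ a singular free-boundary minimal cone as $\sigma\to 0$). That transfer of singularity is exactly what fails to be automatic: the blow-up/limit object is obtained only in weak topologies ($W^{1,2}_{loc}\cap C^\alpha$ for the rescaled height functions $u/\tan\theta$, plus Hausdorff convergence of free boundaries), and by Jerison--Savin the limit Alt--Caffarelli minimizer in $n\le 4$ is the \emph{flat} solution $(x\cdot\mathbf n)_+$, so there is no singular limit to contradict. The real work in the paper is to upgrade this to a rigidity statement for the capillary cone itself: Lipschitz graphicality and nondegeneracy via Allard's theorem and the Bombieri--Giusti Harnack inequality, then a hodograph transform turning the problem into a uniformly elliptic oblique-derivative equation whose De Giorgi--Nash--Moser/Schauder estimates give $C^{2,\alpha}$ convergence away from the origin, and finally a Simons-equation argument with a boundary trace inequality showing $A_\Sigma\equiv 0$ exactly. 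Similarly for $\theta$ near $\pi/2$ the paper does not use reflection plus ``upper semicontinuity of the singular dimension'' (which is not available without a cone classification); it runs a quantitative Simons-type stability computation following Li--Zhou--Zhu, with the boundary terms controlled by $|\cot\theta|$ times a trace inequality, valid for $4\le n\le 6$ once $\theta$ is close enough to $\pi/2$. Without some such quantitative rigidity or $\eps$-regularity bridging the soft limit and the cone itself, your compactness arguments in (2) and (3) do not close.
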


Previously, the best known partial regularity, $\dim\sing(M)\le n-3$, was proven by Taylor in \cite{Taylor1}. This was extended to the anisotropic case (extending the free surface energy to an elliptic integral depending on the surface unit normal) by De Philippis--Maggi in \cite{DePhilippisMaggi}. When $\theta= \frac{\pi}{2}$, the surface $M$ is usually called a free-boundary minimal surface, and satisfies the sharp estimate $\dim\sing(M)\le n-7$ by Gr\"uter--Jost \cite{GruterJost}. Capillary surfaces have shown potential applications in comparison geometry (see, e.g.\ \cite{LiPolyhedron,LiDihedral}), where a major stumbling block was the lack a satisfactory boundary regularity in the form of Theorem \ref{thm:main.regularity}.

By the $\varepsilon$-regularity and compactness theorems due to De Philippis--Maggi \cite{DePhilippisMaggi}, the monotonicity formula of Kagaya-Tonegawa \cite{KagayaTonegawa} (see also \cite{DeEdGaLi}), and Federer's dimension reduction, Theorem \ref{thm:main.regularity} follows from the classification of capillary minimizing cones with an isolated singularity. Thus, for every open set $U\subset \RR^{n+1}$, let us consider the functional (recall that $\cos\theta = \sigma$ by the first variation)
\[\cA_U^\theta (\Omega) = |\partial^* \Omega \cap \RR^{n+1}_+ \cap U| - \cos\theta |\partial^* \Omega \cap \partial \RR^{n+1}_+\cap U|.\]
We denote $\cA_{\RR^{n+1}}^\theta$ simply by $\cA^\theta$, where $\RR^{n+1}_+=\{x_1>0\}$ is an open half space. We say that a Cacciopolli set $\Omega\subset \RR^{n+1}_+$ minimizes $\cA^\theta$, if for every pre-compact open set $U\subset \RR^{n+1}$,
\[ \cA_U^\theta (\Omega)\le \cA_U^\theta (\Omega'),\]
for every Cacciopolli set $\Omega'\subset \RR^{n+1}_+$ such that $\Omega\Delta\Omega'$ is compactly contained in $U$.

Theorem \ref{thm:main.regularity} is thus a consequence of the following classification result for cones. 

\begin{theo}\label{thm:main.cone}
	There exist $\theta_0, \theta_1>0$ such that the following holds. Let $\Omega\subset \RR^{n+1}_+$ be a minimizing cone for $\cA^\theta$ and  $M=\partial \Omega\cap \RR^{n+1}_+$. Assume that $M$ is smooth away from $0$. Suppose one of the following holds:
	\begin{enumerate}
		\item $n\le 3$;
		\item $n=4$, $\theta\in (0,\theta_0)\cup (\pi-\theta_0,\pi)$;
		\item $4\le n\le 6$, $\theta\in (\frac{\pi}{2}-\theta_1,\frac{\pi}{2}+\theta_1)$.
	\end{enumerate}
	Then $M$ is flat.
\end{theo}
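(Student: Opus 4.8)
The plan is to treat the three cases by rather different mechanisms, all reducing to the nonexistence of a nonflat minimizing capillary cone with isolated singularity at $0$ in the relevant dimension and angle range. For case (1), $n \le 3$, I would argue directly by a dimension-reduction/stratification argument combined with the classical regularity theory. A minimizing capillary cone $M = \partial\Omega \cap \RR^{n+1}_+$ with isolated singularity has a link $M \cap S^n_+$ which is a smooth, embedded, stable (indeed minimizing) capillary hypersurface in the hemisphere meeting the boundary great sphere at the constant angle $\theta$; when $n \le 2$ this link is a union of geodesic arcs/circles and one checks by direct first- and second-variation arguments (or the Simons-type inequality adapted to the capillary setting, which must already be in the paper's toolbox) that the only stable possibility forces the cone to be a half-hyperplane or a hyperplane meeting $\partial\RR^{n+1}_+$. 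For $n = 3$ one needs the capillary analogue of the fact that there is no singular stable minimal cone in $\RR^4$; I would invoke the interior regularity $\dim \sing \le n - 7$ of Gr\"uter--Jost type away from the boundary to rule out interior singularities, so the cone's only possible singularity is at a boundary point, and then use a doubling/reflection trick or a direct second-variation estimate on the two-dimensional capillary link in $S^3_+$.

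For case (3), $\theta$ near $\pi/2$, the strategy is perturbative around the free-boundary case. When $\theta = \pi/2$ exactly, doubling $\Omega$ across $\partial\RR^{n+1}_+$ produces a two-sided minimizing minimal cone in $\RR^{n+1}$, which by Simons is flat for $n \le 6$; equivalently, Gr\"uter--Jost gives $\dim\sing \le n-7$, so an isolated singularity at $0$ forces flatness when $n \le 6$. The content for $\theta \ne \pi/2$ is a compactness-and-contradiction argument: if the claim failed there would be $\theta_j \to \pi/2$ and nonflat minimizing cones $\Omega_j$ for $\cA^{\theta_j}$ with isolated singularities in dimension $4 \le n \le 6$; after normalizing (by the monotonicity formula the densities are bounded below away from $1$, since nonflat) and passing to a subsidiary limit, $\Omega_j$ converges to a minimizing cone $\Omega_\infty$ for $\cA^{\pi/2}$, which is flat, and one then derives a contradiction with a capillary $\varepsilon$-regularity theorem: once $\theta_j$ is close enough to $\pi/2$ and $\Omega_j$ is close enough (in flat norm, with area close) to a half-hyperplane, the $\varepsilon$-regularity theorem of De Philippis--Maggi type forces $\Omega_j$ to be smooth near $0$, contradicting the isolated singularity. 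The technical heart is making the convergence and the $\varepsilon$-regularity statement robust uniformly as $\theta_j \to \pi/2$; I expect this is where the hypothesis $n \le 6$ is genuinely used and where the threshold $\theta_1$ is extracted.

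For case (2), $\theta$ near $0$ or $\pi$ with $n = 4$, the plan follows the connection with the one-phase Bernoulli problem advertised in the abstract. As $\theta \to 0$, the wetting term $-\cos\theta\,|\partial^*\Omega \cap \partial\RR^{n+1}_+|$ dominates and, after the appropriate rescaling of the container (blowing up near the wetted region), a minimizing capillary set should converge to a minimizer of the one-phase (Alt--Caffarelli) functional $\int |\nabla u|^2 + |\{u > 0\}|$, whose free boundary is regular in dimension $\le 4$ (Alt--Caffarelli in low dimensions, with $d^* = 5$ from Jerison--Savin, so a free-boundary cone in $\RR^4$ is flat). Concretely, I would set up a blow-up/blow-down scheme: a nonflat minimizing capillary cone $\Omega$ for $\cA^\theta$ in $\RR^5$ (so $n = 4$) with isolated singularity, with $\theta < \theta_0$, is compared to the Bernoulli picture via the rigorous correspondence proved earlier in the paper, extracting in the limit $\theta \to 0$ a nontrivial one-phase Bernoulli cone in $\RR^4$; since there are none (Alt--Caffarelli/Jerison--Savin regularity), an $\varepsilon$-regularity statement on the capillary side — valid once $\theta$ is small and the configuration is close to the Bernoulli model — yields smoothness at $0$, a contradiction. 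The case $\theta \to \pi$ is symmetric, replacing $\Omega$ by its complement in $\RR^{n+1}_+$.

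\textbf{Main obstacle.} The principal difficulty in all three cases is the same: establishing a \emph{uniform} $\varepsilon$-regularity theorem for the capillary functional $\cA^\theta$ that is stable as $\theta$ approaches the degenerate values $0$, $\pi/2$, or $\pi$, together with the attendant compactness theory (uniform density bounds, closure of the class of minimizers under the appropriate rescalings, and — for case (2) — a quantitative version of the capillary-to-Bernoulli limit that controls the free boundary, not merely the bulk). I expect the case-(2) analysis to be the hardest, because the limiting problem changes type (from a minimal-surface-type problem to the Alt--Caffarelli problem), so the blow-up has to be done at the correct scale and one must rule out a loss of mass to infinity or concentration on the container wall when passing to the Bernoulli limit.
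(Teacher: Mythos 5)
Your outline identifies the right three mechanisms (link analysis for $n\le 3$, the Alt--Caffarelli limit for $\theta\to 0$, proximity to the free-boundary case for $\theta\to\pi/2$), but in each case the step that actually closes the argument is assumed rather than supplied, and in case (2) the assumed step is precisely the one that is not available. For case (2) you reduce to ``an $\varepsilon$-regularity statement on the capillary side, valid once $\theta$ is small and the configuration is close to the Bernoulli model.'' No such $\theta$-uniform $\varepsilon$-regularity theorem exists: the constants in the De Philippis--Maggi theory degenerate as $\cos\theta\to 1$, because the surface collapses onto the wall at rate $\tan\theta$ and flatness must be measured at that degenerate scale. The paper does not prove such a theorem; instead, after establishing the graphical convergence $u/\tan\theta\to(x\cdot\mathbf n)_+$ in $W^{1,2}\cap C^\alpha$ (Theorem \ref{thm:graphical.convergence.to.AC.minimizer}, using Lemma \ref{lemm:min.boundary}, the Bombieri--Giusti Harnack inequality, and the nondegeneracy Lemma \ref{prop.boundary.behavior.area.minimizing}), it upgrades this to $C^{2}$ convergence away from the vertex by a hodograph transform: the contact-angle condition becomes an oblique, uniformly elliptic boundary condition for the transformed graph, De Giorgi--Nash--Moser plus Schauder give uniform $C^{2,\alpha}$ bounds, whence $|A_\Sigma|\cot\theta\to 0$ on the link. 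Flatness is then concluded not by $\varepsilon$-regularity but by integrating Simons' equation on $\Sigma$, controlling the boundary term via the Li--Zhou--Zhu bound $|\partial_\eta|A_\Sigma|^2|\le\Lambda\cot\theta|A_\Sigma|^3$ and the trace inequality of Lemma \ref{lemm.trace}. Your ``main obstacle'' paragraph correctly names this gap, but naming it is not resolving it; also note the correct normalization is the vertical rescaling $u/\tan\theta$, not a rescaling of the container.

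Cases (1) and (3) also have gaps or genuine divergences. For $n=3$ a doubling/reflection trick is unavailable when $\theta\ne\pi/2$, and ``a direct second-variation estimate on the link'' is not an argument: the paper needs the Caffarelli--Hardt--Simon spectral bound \eqref{eq:stable.spectral} with constant $-\left(\tfrac{n-2}{2}\right)^2$, then $f\equiv 1$ plus Gauss--Bonnet (using $\cot\theta\,A(\eta,\eta)=-k_g$) to force $\chi(\Sigma)>0$, so $\Sigma$ is a disk, and finally the Fraser--Schoen Hopf-differential argument, where the capillary boundary condition and the total geodesy of $\partial S^n_+$ give $A(\partial_r,\partial_\theta)=0$ on $\partial\Sigma$ and hence $\Sigma$ totally geodesic; none of this is in your proposal. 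For case (3) your compactness-and-contradiction scheme (limit onto an $\cA^{\pi/2}$-minimizing cone, invoke Gr\"uter--Jost, then $\varepsilon$-regularity at the vertex for large $j$) is a genuinely different, softer route than the paper's: the paper runs a quantitative Simons-type computation \`a la Li--Zhou--Zhu, plugging $f|A|^p$ into the stability inequality \eqref{eq:stable.euclidean} and absorbing the boundary terms with the trace inequality of Lemma \ref{lemma.trace.ineq}. Your route could plausibly be made to work for minimizers since the problem is nondegenerate near $\pi/2$, but it requires a carefully stated boundary $\varepsilon$-regularity/compactness theorem uniform in $\theta$ near $\pi/2$, and also the beginning correction that reflecting a free-boundary minimizer across the wall need not produce a minimizer (Gr\"uter--Jost is the right citation, as you note); the paper's computation, by contrast, yields an explicit $\theta_1(n)$ and applies to stable cones, not only minimizing ones.
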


Since any $\cA^\theta$-minimizer $\Omega$ gives a $\cA^{\pi-\theta}$-minimizer $\RR^{n+1}_+\setminus \Omega$, we may, without loss of generality, assume that $\theta\in (0,\frac{\pi}{2}]$. With this in mind, each of the three cases in Theorem \ref{thm:main.cone} is treated differently. Case  (1) follows from an Almgren-type argument as in \cite{Almgren} by analyzing the stability inequality on the spherical link $\Sigma=M\cap S^n(1)$, and a topological classification result for embedded capillary disks analogous to Fraser--Schoen \cite{FraserSchoen} in the  free-boundary case. Case (3) is analogous to the previous work by Li--Zhou--Zhu on the stable Bernstein theorem for capillary surfaces \cite[Appendix C]{LiZhouZhu}, which follows from a careful Simons-type computation as in \cite{Simons}, extending the one for stable free-boundary minimal hypersurfaces.

\subsection{Relationship with one-phase free boundary problem} The most interesting part of Theorem \ref{thm:main.cone} is Case (2), and it relies on a novel application of the connection between the capillary problem and the one-phase Bernoulli free boundary problem. To motivate the discussion, let us assume, for simplicity, that $\RR^{n+1}_+ = \{x_1>0\}$, and $M$ is contained in the graph over the $x_1$-direction of a Lipschitz function $u$ - i.e. there exists $u\in \Lip (\RR^n=\{x_1=0\})$ such that
\[M = \{(u(x'),x'): x'\in \RR^{n}, u(x')>0\},\]
here $x'=(x_2,\cdots,x_n)$. Set $v=\frac{u}{\tan\theta}$. Assuming that $|\Lip(v)|$ is uniformly bounded as $\theta\to 0$, we compute that
\begin{equation}\label{eq.capillary.vs.AC.formally}
\begin{split}
\cA^\theta (\Omega) &= \int_{\{ u > 0 \} \subset \RR^n}\left(\sqrt{1+|Du|^2} - \cos\theta 1_{\{u >0\}}\right)dx'\\
&= \int_{\{ u > 0 \}} \left(\sqrt{1+\tan^2\theta |Dv|^2} - \cos\theta 1_{\{v >0\}}\right)dx'\\
&= \int_{\{ u > 0\}} \left((1-\cos\theta) 1_{\{v >0\}} +\frac 12 \tan^2\theta |Dv|^2 + O(\theta^3) \right)dx'\\
&=\frac 12 \tan^2\theta \int_{ \{ u > 0 \} } \left(|Dv|^2 + 1_{\{v >0\}} +O(\theta)\right)dx' 
\end{split}
\end{equation}
Thus, as $\theta\to 0$, $v=\frac{u}{\tan\theta}$ should be an one-homogeneous minimizer of the functional
\begin{equation}\label{eq.AC.functional}
J(v) = \int_{\RR^n} \left(|Dv|^2 + 1_{\{v >0\}}\right)dx'.
\end{equation}
The functional \eqref{eq.AC.functional} is the well-known one-phase Alt--Caffarelli functional, and the above argument shows heuristically that $J$ should be the linearization of $\cA^\theta$ as $\theta \to 0$.

There is a vast literature concerning regularity of minimizers of the Alt--Caffarelli functional; see, for example, \cite{AltCaffarelli}, \cite{CaffarelliJerisonKening}, \cite{JerisonSavin}, \cite{Velichkov}. We also refer the readers to the book by Caffarelli--Salsa \cite{CaffarelliSalsa} for an excellent expository of the problem.  We note that the idea of $J$ being the linearization of $\cA^\theta$ is well-known to the experts (see e.g.\ \cite{CaffarelliFriedman,FeldmanInwon} and \cite[Section 5]{DipierroKarakhanyanValdinoci}), but this seems the first time that the fine approximation \eqref{eq.capillary.vs.AC.formally} has been rigorously established and used to deduce geometric consequences.

\begin{theo}\label{thm:graphical.convergence.to.AC.minimizer}
	Let $\theta_i\to 0+$, $\Omega_i$ be a sequence of cones minimizing $\cA^{\theta_i}$ with an isolated singularity at $0$, and $M_i=\partial \Omega_i\cap \RR^{n+1}_+$. Then for $i$ sufficiently large, $M_i$ is contained in the graph of a Lipschitz function $u_i$ over $\RR^{n}=\partial \RR^{n+1}_+$. Moreover, up to a subsequence, $\{\frac{u_i}{\tan\theta_i}\}$ converges in $(W^{1,2}_{loc}\cap C^\alpha)(\RR^n)$ to an one-homogeneous minimizer $v$ to the Alt--Caffarelli functional $J$ for all $\alpha\in (0,1)$, and the free-boundaries $\partial \{ u_i > 0 \} \to \partial \{ v > 0 \}$ in the local Hausdorff distance.
\end{theo}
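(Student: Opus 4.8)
The plan is to run the argument in three stages: (I) show that for $i$ large $M_i$ is a Lipschitz graph $\{x_1 = u_i(x')\}$ over its wetted region $\{u_i > 0\} \subset \RR^n$, with the uniform‑in‑$i$ bounds $\Lip(u_i) \le C\theta_i$ together with two–sided density estimates and nondegeneracy for $\{u_i > 0\}$; (II) use the expansion \eqref{eq.capillary.vs.AC.formally} to transfer the minimality of $\Omega_i$ to an almost‑minimality of $v_i := u_i/\tan\theta_i$ for $J$, and pass to the limit; (III) upgrade to Hausdorff convergence of the free boundaries. Stage (I), and in particular the uniformity of all constants as $\theta_i \to 0$, is the main obstacle.

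The key observation for Stage (I) is that since $\Omega_i$ minimizes $\cA^{\theta_i}$ with $g \equiv 0$, the interface $M_i$ is a minimal hypersurface, singular only at $0$, so the linear function $x_1$ restricts to a harmonic function on $M_i$; it is nonnegative, vanishes exactly on the contact line $\partial M_i \subset \partial\RR^{n+1}_+$, and the capillary angle condition forces its inward conormal derivative along $\partial M_i$ to equal $\sin\theta_i$. Consequently, on the link $\Sigma_i = M_i \cap S^n$, the function $x_1|_{\Sigma_i}$ is a positive first Dirichlet eigenfunction (with eigenvalue $n-1$, by $1$–homogeneity) normalized so that its boundary conormal derivative is $\sin\theta_i$; pairing the divergence theorem (which gives $\|x_1|_{\Sigma_i}\|_{L^1} \le C\sin\theta_i$ once one has the minimality‑based bound on $\cH^{n-1}(\partial\Sigma_i)$) with De Giorgi–Nash–Moser estimates built on the Michael–Simon Sobolev inequality for the minimal link yields the height bound $M_i \cap B_1 \subset \{0 \le x_1 \le C\theta_i\}$. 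Once $M_i$ lies in this thin slab and meets $\partial\RR^{n+1}_+$ at the small angle $\theta_i$, a competitor argument shows its projection to $\RR^n$ is injective, hence $M_i = \graph(u_i)$: if two nearly horizontal sheets lay over a common region one could either fill in the air pocket between them or, if the region in between is fluid, slide that fluid down onto the wall — in either case strictly decreasing $\cA^{\theta_i}$ (in the second case even gaining wetting energy), contradicting minimality. Finally $u_i$ solves the minimal surface equation on $\{u_i > 0\}$ with boundary slope $\sim\theta_i$ (again from the contact angle), so the Bombieri–De Giorgi–Miranda interior gradient estimate propagates this to $\Lip(u_i) \le C\theta_i$ on $B_{1/2}$, hence, by $1$–homogeneity, everywhere; the density and nondegeneracy estimates for $\{u_i > 0\}$ come from the same comparison arguments, uniformly in $i$.

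For Stage (II), set $v_i := u_i/\tan\theta_i$; by Stage (I) the $v_i$ are $1$–homogeneous (as each $\Omega_i$ is a cone) and locally uniformly Lipschitz, so along a subsequence $v_i \to v$ in $C^\alpha_{loc}$ and weakly in $W^{1,2}_{loc}$, with $v$ $1$–homogeneous. For fixed $\varphi \in C^\infty_c(\RR^n)$ and $i$ large, the profile $v_i + \varphi$ corresponds to an admissible competitor for $\Omega_i$ supported in a fixed compact set, and inserting \eqref{eq.capillary.vs.AC.formally} — whose relative error for a uniformly Lipschitz graph is $O(\theta_i^2)$ — into the minimality inequality gives $J_{B_R}(v_i) \le J_{B_R}(v_i + \varphi) + C(R,\varphi)\theta_i^2$, so $v_i$ is an almost‑minimizer of $J$ with vanishing gauge. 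The standard compactness theory for the Alt–Caffarelli functional then shows $v$ minimizes $J$: lower semicontinuity of the Dirichlet energy plus comparison (using $v$, modulo a thin boundary layer, as a competitor for $v_i$) forces $\int |Dv_i|^2 \to \int |Dv|^2$, while the uniform nondegeneracy and two‑sided density estimates force $1_{\{v_i > 0\}} \to 1_{\{v > 0\}}$ in $L^1_{loc}$; in particular $v_i \to v$ strongly in $W^{1,2}_{loc}$. Nontriviality $v \not\equiv 0$ follows e.g.\ from the lower mass bound $\cH^n(M_i \cap B_1) \ge c > 0$, which, since $\Lip(u_i) \to 0$, forces $|\{v_i > 0\} \cap B_1| \ge c$ and hence $\sup_{B_1} v_i \ge c' > 0$ by nondegeneracy (alternatively, Weiss monotonicity for almost‑minimizers keeps the density of the $1$–homogeneous $v_i$ bounded below).

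For Stage (III), the uniform nondegeneracy and density estimates from Stages (I)–(II) give precisely the local Hausdorff convergence $\partial\{v_i > 0\} \to \partial\{v > 0\}$: any limit point of a sequence $x_i \in \partial\{v_i > 0\}$ satisfies $v(x) = 0$ and lies in $\overline{\{v > 0\}}$ by the density lower bound on $\{v_i > 0\}$, hence in $\partial\{v > 0\}$; conversely every point of $\partial\{v > 0\}$ is approximated by points of $\partial\{v_i > 0\}$ by $C^0$–convergence and the density lower bound on $\{v_i = 0\}$. Since $\partial\{u_i > 0\} = \partial\{v_i > 0\}$, this is the asserted convergence. The delicate point throughout is ensuring that the geometric estimates on the minimal links (Sobolev/Ahlfors regularity) and the $\eps$–regularity/gradient estimates for $\cA^{\theta}$–minimizers hold with constants that do not degenerate as $\theta \to 0$; this is exactly where the approximation by $J$ pays off, since any blow‑up in a contradiction argument converges, via \eqref{eq.capillary.vs.AC.formally}, to a minimizer of $J$, which is flat by \cite{AltCaffarelli}.
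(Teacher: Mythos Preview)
Your Stages (II) and (III) are essentially the content of Proposition \ref{prop.approximating.AC.minimizer} and are fine. The real gap is in Stage (I). The divergence identity you invoke reads $(n-1)\int_{\Sigma_i} x_1 = \sin\theta_i\,\cH^{n-1}(\partial\Sigma_i)$, so the bound $\|x_1\|_{L^1(\Sigma_i)} \le C\sin\theta_i$ is \emph{equivalent} to a uniform bound $\cH^{n-1}(\partial\Sigma_i) \le C$, and you give no argument for it. Testing the first variation with $-e_1$ (as in Lemma \ref{lemma.trace.ineq}) reproduces exactly the same identity and hence yields nothing; I do not see a direct ``minimality-based'' bound on the free-boundary measure that does not already presuppose the height or graphicality control you are trying to prove. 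Even granting $\sup_{\Sigma_i} x_1 \le C\theta_i$, homogeneity only gives $u_i(z) \le C\theta_i|z|$, not the nondegeneracy $u_i(z) \le C\theta_i\,d(z,\partial\{u_i>0\})$ needed to feed the interior gradient estimate near the free boundary.

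The paper's route to Stage (I) is different and indirect. Near $\partial M_i$ the two-sided bound $\tfrac{1}{2c_0}\tan\theta_i\,d \le x_1 \le 2\tan\theta_i\,d$, $d = d(\pi(x),\partial M_i)$, holds on $\{d < \gamma_i\}$ for \emph{some} $\gamma_i > 0$, simply from smoothness and the contact-angle condition. Away from $U_\gamma(\partial M_i)$ the surface $M_i$ is an area-minimizing boundary (Lemma \ref{lemm:min.boundary}), and the Bombieri--Giusti Harnack inequality (Theorem \ref{thm:harnack}) applied to the positive harmonic function $x_1$ propagates the height bound outward with constants depending on $\gamma$ (Lemma \ref{prop.boundary.behavior.area.minimizing}). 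The crux is Proposition \ref{prop.fine.distance.estimates}, which shows $\gamma_i \ge d_0(n)$ uniformly: if not, rescale at the smallest failing scale; there the height bound \emph{does} hold on $\{d<1\}$, so Lemma \ref{prop.boundary.behavior.area.minimizing} and Proposition \ref{prop.approximating.AC.minimizer} apply, the rescalings converge to an entire $J$-minimizer, and Lemma \ref{lemm.C0.bound.on.AC.minimizer} returns the sharp height bound at the failing point --- contradiction. Your closing sentence about blow-ups converging to $J$-minimizers is exactly this idea, but note the circularity it must break: invoking \eqref{eq.capillary.vs.AC.formally} already requires graphicality with $\Lip \le C\theta$, and the bootstrap starting from the smooth-boundary region (where this is free) and propagating by Harnack is precisely the missing mechanism.
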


Therefore a smooth (away from $0$) capillary cone with small angle has a fine approximation in by $1$-homogenous minimizers to $J$. Additionally, in low dimensions (e.g. $n\le 4$ by \cite{CaffarelliJerisonKening,JerisonSavin}), it is known that an one-homogeneous minimizer to \eqref{eq.AC.functional} is $(x\cdot \mathbf{n})_+$ for some unit vector $\mathbf n$. In this case, we show that the improved convergence 
\[\frac{u_i}{\tan\theta_i}\to v\]
holds actually in $C^2$ away from the origin. We use this to prove case (2) of Theorem \ref{thm:main.cone}. More generally, we have the following result.

\begin{theo}\label{thm:AC.bernstein.implies.capillary.bernstein}
	Let $n\ge 2$. Assume that the only one-homogeneous minimizers of $J$ on $\RR^n$ are given by $(x\cdot \mathbf{n})_+$ for some unit vector $\mathbf{n}$.  Then there exists $\theta_0=\theta_0(n)>0$, such that any minimizing cone for $\cA^\theta$ in $\RR^{n+1}_+$ with an isolated singularity at $0$ is flat.
\end{theo}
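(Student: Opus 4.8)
The plan is to argue by contradiction, using Theorem~\ref{thm:graphical.convergence.to.AC.minimizer} as a blow-up step and then bootstrapping to $C^2$ convergence. Since $\Omega\mapsto\RR^{n+1}_+\setminus\Omega$ sends an $\cA^\theta$-minimizer to an $\cA^{\pi-\theta}$-minimizer, it suffices to produce $\theta_0>0$ so that no non-flat minimizing cone with an isolated singularity at $0$ exists for $\theta\in(0,\theta_0)$. So I would suppose there are $\theta_i\to 0^+$ and minimizing cones $\Omega_i$ for $\cA^{\theta_i}$ whose $M_i=\partial\Omega_i\cap\RR^{n+1}_+$ has an isolated singularity at $0$ and is not flat. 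By Theorem~\ref{thm:graphical.convergence.to.AC.minimizer}, for large $i$ one has $M_i=\graph(u_i)$ with $u_i$ a $1$-homogeneous Lipschitz function on $\RR^n$, and, along a subsequence, $v_i:=u_i/\tan\theta_i\to v$ in $W^{1,2}_{loc}\cap C^\alpha$ with $v$ a $1$-homogeneous minimizer of $J$ and $\partial\{u_i>0\}\to\partial\{v>0\}$ in local Hausdorff distance; by hypothesis $v=(x\cdot\mathbf n)_+$ for a unit vector $\mathbf n\in\RR^n$.

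Next I would upgrade this to $C^2_{loc}$ convergence away from the origin, up to and including the free boundary. On $\{v>0\}$ the cone $M_i$ is minimal, so $v_i$ solves $\Div\!\big(Dv_i/\sqrt{1+\tan^2\theta_i\,|Dv_i|^2}\big)=0$ in $\{v_i>0\}$; the uniform Lipschitz bound on the $v_i$ furnished by the proof of Theorem~\ref{thm:graphical.convergence.to.AC.minimizer} makes this equation uniformly elliptic with coefficients converging to those of the Laplacian as $\theta_i\to 0$, so De Giorgi--Nash and Schauder estimates, together with the Hausdorff convergence of the free boundaries, give uniform $C^{2,\beta}_{loc}$ bounds on compact subsets of $\{v>0\}\setminus\{0\}$ and hence $v_i\to v$ there in $C^{2,\beta}_{loc}$. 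Near $\{x\cdot\mathbf n=0\}$ the limit is nondegenerate ($|Dv|=1$), and the contact-angle condition for $M_i$ reads precisely $|Dv_i|=1$ on $\partial\{v_i>0\}$ --- the Alt--Caffarelli free-boundary condition --- so the free boundaries of the $v_i$ are in the flat regime, and an Alt--Caffarelli-type improvement-of-flatness argument, adapted to the rescaled capillary functional (which $v_i$ minimizes up to errors of size $O(\theta_i)$), should give that $\partial\{v_i>0\}$ is $C^{2,\beta}$ with uniform estimates and that $v_i\to v$ in $C^{2,\beta}$ up to the free boundary, with $\partial\{v_i>0\}\to\{x\cdot\mathbf n=0\}$ in $C^{2,\beta}$.

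Finally I would use homogeneity and this strong convergence to force $M_i$ to be flat for large $i$, which is the desired contradiction. For each large $i$ pick a unit vector $\mathbf m_i\to\mathbf n$ so that $(x\cdot\mathbf m_i)_+$ is the closest half-plane solution to $v_i$ (in $L^2$ over the link) and write $v_i$ as a graph $w_i$ over it; then $w_i$ is $1$-homogeneous, $C^{2,\beta}$-small, and $L^2$-orthogonal to the $(n-1)$-dimensional family of infinitesimal tiltings of the half-plane solution. The interior equation and the free-boundary condition linearize, with vanishing errors, to $\Delta w=0$ in $\{x\cdot\mathbf n>0\}$ and $\partial_{\mathbf n}w=0$ on $\{x\cdot\mathbf n=0\}$. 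Normalizing $w_i$ by its $L^2$ norm on the link and passing to a limit (elliptic estimates plus Schauder compactness) gives a nonzero, bounded, $1$-homogeneous solution $w$ of this linear problem that is still $L^2$-orthogonal to the tiltings; but even reflection across $\{x\cdot\mathbf n=0\}$ turns $w$ into a bounded $1$-homogeneous harmonic function on $\RR^n$ (the origin being a removable singularity since $n\ge 2$), hence linear, hence one of the tiltings --- so $w=0$, a contradiction. Therefore $w_i\equiv 0$ for large $i$, i.e.\ $M_i$ is flat, contradicting our assumption.

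The main obstacle will be the free-boundary regularity in the second step: establishing, \emph{uniformly} as $\theta_i\to 0$, that once the free boundaries $\partial\{v_i>0\}$ are Hausdorff-close to a hyperplane they are $C^{2,\beta}$ with estimates independent of $i$ --- i.e.\ transferring the Alt--Caffarelli improvement-of-flatness theory to the (rescaled) capillary functional with constants that survive the degeneration of the functional, where the model half-hyperplane becomes tangent to the wall. Once this uniform boundary regularity, together with the transfer of $C^2$ information between the original and rescaled pictures, is in hand, the interior estimate and the concluding Liouville/rigidity argument are routine.
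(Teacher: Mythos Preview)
Your overall architecture---argue by contradiction, invoke Theorem~\ref{thm:graphical.convergence.to.AC.minimizer}, upgrade to $C^2$ convergence up to the free boundary, then conclude by a rigidity argument---matches the paper's. The differences lie in how the two hard steps are executed.

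For the boundary regularity (your ``main obstacle''), the paper does not attempt to port Alt--Caffarelli improvement-of-flatness to the capillary functional. Instead it uses a \emph{hodograph transform}: after rotating so that $\mathbf{n}=e_{n+1}$, one rewrites the set $\{(u(z)/\tan\theta,\,z):z\in U\}$ as the graph of a function $g$ in the $e_{n+1}$-direction over the \emph{fixed} half-space $\{x_1\ge 0\}\subset\{x_{n+1}=0\}$. This straightens the unknown free boundary $\partial U$ once and for all; $g$ then satisfies a divergence-form equation with an oblique-derivative boundary condition on $\{x_1=0\}$. The key computation is that, although the raw coefficients degenerate as $\theta\to 0$, after differentiating tangentially the equation for $w=D_kg$ ($k\ge 2$) has coefficients converging to $\delta_{ij}$, and its boundary condition is the \emph{natural} (conormal) one, so an even reflection makes it a uniformly elliptic equation on all of $\RR^n$. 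De Giorgi--Nash--Moser then gives uniform $C^\alpha$ control on $D_kg$ for $k\ge 2$; a Caccioppoli/Morrey argument together with the nondivergence form of the equation recovers $D_1g$; Schauder then bootstraps to $g\to x_1$ in $C^{2,\alpha}$. This sidesteps entirely the difficulty you flag---that $\{u>0\}$ is only known to be $C^1$-close to a half-space---and yields the stronger conclusion $|D^2u|/\tan\theta\to 0$, i.e.\ $|A_\Sigma|=o(\tan\theta)$ on the link $\Sigma=M\cap S^n$.

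For the concluding rigidity, the paper also takes a different route from your linearization/Liouville step. Having $|A_\Sigma|\cot\theta\to 0$, it integrates the Simons identity on $\Sigma$, bounds the boundary term using $|\partial_\eta |A_\Sigma|^2|\le\Lambda\cot\theta\,|A_\Sigma|^3$ together with a trace inequality on $\Sigma$ (Lemma~\ref{lemm.trace}), and absorbs everything once $|A_\Sigma|\cot\theta$ is small enough to force $A_\Sigma\equiv 0$. Your Jacobi-field/Liouville argument is a reasonable alternative and should go through once you actually have $C^{2,\beta}$ convergence up to the boundary; note, however, that it needs that full convergence as an input, whereas the Simons route only requires the scalar smallness $|A_\Sigma|=o(\tan\theta)$ produced by the hodograph step.
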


\subsection{Organization} In Section 2 we recall some preliminaries for capillary surfaces, minimal cones in the Euclidean space and the Alt-Caffarelli functional. In Section 3 we prove case (1) of Theorem \ref{thm:main.cone}. We carry out the rigorous relation between the capillary problem and the one-phase Bernoulli problem in Section 4, and prove case (2) of Theorem \ref{thm:main.cone}. Finally, we prove case (3) of Theorem \ref{thm:main.cone} in Section 5.

\subsection{Acknowledgements}  We would like to thank Daniela De Silva and William Feldman for bringing our attention to the connection between the capillary problem and the one-phase Bernoulli problem, and thank Guido De Philippis, Fang-Hua Lin, and Joaquim Serra for helpful conversations.

O.C. was supported by a Terman Fellowship and an NSF grant (DMS-2304432). N.E. was supported by an NSF grant (DMS-2204301).  C.L. was supported by an NSF grant (DMS-2202343) and a Simons Junior Faculty Fellowship.

\section{Preliminaries}

\subsection{Notation}

We take $(X,h)$ to be a smooth Riemannian manifold with boundary, $E\subset X$ an open set with $M=\partial E\subset \mathring{X}$ smooth. Let $\nu$ be the unit normal vector field of $M$ in $E$ that points into $E$, $\eta$ be the unit normal vector field of $\partial M$ in $M$ that points out of $X$, $\overline \eta$ be the unit normal vector field of $\partial X$ in  $X$ that points outward $X$, $\bar \nu$ be the unit normal vector field of $\partial M$ in $\partial X$ that points into $\partial E\cap \partial X$. Let $S: T_M\to T_M$ denote the shape operator defined by $S(Y) = -\nabla_Y \nu$, and $A$ the second fundamental form on $M$ given as $A(Y,Z) = \langle S(Y),Z\rangle = \langle \nabla_Y Z, \nu\rangle$. Particularly, if $M$ is unit sphere in $\RR^{n+1}$ viewed as the boundary of the unit ball $E$ then we have that $S=\id$ and $A =g_M$.

We write $\RR^{n+1}_+ = \{x\in \RR^{n+1}: x_1>0\}$, $\RR^n = \partial \RR^{n+1}_+= \{x_1=0\}$, and let $\pi: \RR^{n+1}_+\to \RR^{n}$ be the orthogonal projection. Given any set $S$, let $d(x,S)$ be the Euclidean distance from $x$ to $S$. We denote by $B_r(S) = \{x: d(x,S)<r\}$. Constants of the form $c$ or $c_i$ will always be $\geq 1$. 

\subsection{First and second variation of capillary functional}
Say $E$ is stationary for the capillary functional \eqref{eq:capillary.functional}, if $\frac{d}{dt}|_{t=0}\cA(E_t)\ge 0$ for all variations $E_t$. Take $M=\partial E\cap \mathring X$. The stationary condition is equivalent to
\[H_M = g \text{ in }\mathring X, \quad \langle \nu, \bar \eta \rangle = \cos\theta\text{ on }\partial M\cap \partial X.\]
$E$ is stable for \eqref{eq:capillary.functional}, if $\frac{d^2}{dt^2}|_{t=0}\cA(E_t)\ge 0$ for all variations $E_t$. The stability condition is equivalent to 
\begin{equation}\label{eq:stable}
\int_M \left(|\nabla_M f|^2 -(|A_M|^2+\Ric(\nu,\nu))f^2\right) d\cH^n -\cot\theta \int_{\partial M}A_M(\eta,\eta)f^2d\cH^{n-1}\ge 0,
\end{equation}
for all $f\in C_0^1(M)$. We refer the readers to \cite[Section 1]{RosSouam} for the deductions. Particularly, if $X=\RR^{n+1}$, then stability gives
\begin{equation}\label{eq:stable.euclidean}
\int_M (|\nabla_M f|^2 - |A_M|^2 f^2)d\cH^n -\cot\theta \int_{\partial M} A_M(\eta,\eta)f^2 d\cH^{n-1}\ge 0.
\end{equation}
We write $L_M = \Delta_M + |A_M|^2$ the Jacobi operator.

We will often work with capillary stable (minimizing) cones $\Omega$ in $\RR^{n+1}_+$. By definition, $\Omega\subset\RR^{n+1}_+$ is stationary and stable (minimizing) for $\cA$ in $\RR^{n+1}_+$. We say $\Omega$ is a \textit{smooth cone}, if $M=\partial \Omega\cap \RR^{n+1}_+$ is smooth away from the origin.

\subsection{The Alt-Caffarelli functional}

For an open set $U\subset \RR^n$ and $u\in W^{1,2}(U)$, write
\[J_U (u) = \int_U |Du|^2 + \chi_{U\cap \{u>0\}} dx\]
for the Alt--Caffarelli functional. We will say that $u \in W^{1,2}_{loc}(U)$ minimizes $J_U$ if for every $U' \subset\subset U$ and every $w - u \in W^{1,2}_0(U')$ we have $J_{U'}(u) \leq J_{U'}(w)$.  If $U = \RR^n$ we will simply write $J_{\RR^n} = J$, and call any minimizer of $J_{\RR^n}$ an entire minimizer of $J$.

We require the following basic fact.
\begin{lemm}\label{lemm.C0.bound.on.AC.minimizer}
	Let $u\in W_\textnormal{loc}^{1,2}(\RR^n)$ be an entire minimizer of $J$. Then there exists a constant $c_0(n)>0$ such that 
	\[\frac{1}{c_0}d(x,\partial\{u>0\}) \le u(x)\le d(x,\partial\{u>0\}).\]
\end{lemm}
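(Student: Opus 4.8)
The plan is to establish the two inequalities separately, both being standard consequences of the minimization property combined with interior gradient/Harnack estimates for the Alt--Caffarelli functional. For the upper bound $u(x) \le d(x, \partial\{u>0\})$: fix $x_0 \in \{u>0\}$ and set $R = d(x_0, \partial\{u>0\})$, so that $u > 0$ on $B_R(x_0)$ and hence, by the Euler--Lagrange equation for $J$ in the positivity set, $u$ is harmonic in $B_R(x_0)$. Since $u \ge 0$ there, the Harnack inequality and interior gradient estimates give $|Du(x_0)| \le C(n) u(x_0)/R$; but one wants the sharper statement with constant $1$. The cleanest route is the classical observation (going back to Alt--Caffarelli \cite{AltCaffarelli}) that for a minimizer, $|Du| \le 1$ on the free boundary in the viscosity sense, and that $\sup_{B_r(z)} u \le r$ for $z \in \partial\{u>0\}$ — this last fact follows from nondegeneracy-type comparison with the explicit one-plane solution $(x\cdot\mathbf n)_+$. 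Applying this with $z$ a nearest free boundary point to $x_0$ and $r = R$ yields $u(x_0) \le R = d(x_0, \partial\{u>0\})$ directly.

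For the lower bound $u(x) \ge c_0^{-1} d(x, \partial\{u>0\})$: again fix $x_0 \in \{u>0\}$ with $R = d(x_0,\partial\{u>0\})$. Since $u$ is harmonic and positive on $B_R(x_0)$, the interior Harnack inequality gives $u(x_0) \ge c(n) \sup_{B_{R/2}(x_0)} u$, so it suffices to bound $\sup_{B_{R/2}(x_0)} u$ from below by $c\, R$. This is exactly the linear nondegeneracy of Alt--Caffarelli minimizers near the free boundary: there is a constant $\kappa(n) > 0$ such that for any $z \in \partial\{u>0\}$ and any ball $B_\rho(z)$, one has $\fint_{\partial B_\rho(z)} u \ge \kappa \rho$, equivalently $\sup_{B_\rho(z)} u \ge \kappa \rho$. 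Taking $z$ a free boundary point with $|z - x_0| = R$ and $\rho = 2R$ (so $x_0 \in B_\rho(z)$ and $B_{R/2}(x_0) \subset B_\rho(z) $ after adjusting constants), and combining with Harnack, gives $u(x_0) \ge c_0(n)^{-1} R$. The nondegeneracy estimate itself is proved by the standard competitor argument: replace $u$ on $B_\rho(z)$ by the harmonic extension $w$ of its boundary values, use $J_{B_\rho(z)}(u) \le J_{B_\rho(z)}(w)$, and estimate $\int (|Du|^2 - |Dw|^2) = \int |D(u-w)|^2$ against $|B_\rho(z) \cap \{u>0\}|$ via a trace/Poincar\'e inequality, contradicting $|\{u>0\} \cap B_\rho(z)| $ being too small unless $u$ is quantitatively large on a definite portion of the ball.

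The main obstacle — or rather the only genuine content — is the linear nondegeneracy estimate used for the lower bound; the upper bound is essentially the maximum principle plus a comparison with the half-plane profile. Both facts are entirely standard in the Alt--Caffarelli literature, so in the write-up I expect the authors either to cite \cite{AltCaffarelli} (or \cite{CaffarelliSalsa}) directly for the statement $\kappa \, d(x,\partial\{u>0\}) \le u(x) \le d(x,\partial\{u>0\})$, or to reproduce the two-line comparison arguments sketched above. I would structure the proof as: (i) cite interior regularity so $u$ is harmonic in $\{u>0\}$ and locally Lipschitz up to the free boundary with $|Du|\le 1$; (ii) deduce the upper bound by comparison with the linear profile through a nearest free boundary point; (iii) cite or reprove linear nondegeneracy; (iv) combine nondegeneracy with the interior Harnack inequality for the positive harmonic function $u$ on $B_R(x_0)$ to get the lower bound with $c_0 = c_0(n)$.
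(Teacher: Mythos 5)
Your proposal is correct and follows essentially the same route as the paper, whose proof is simply to cite \cite[Corollary 3.6]{AltCaffarelli} for the lower bound (linear nondegeneracy combined with the harmonicity of $u$ in its positivity set) and the global gradient bound $|Du|\le 1$ for entire minimizers from \cite[Section 2.2]{JerisonSavin} for the sharp upper bound. One small caveat in your sketch of the lower bound: applying nondegeneracy centered at the free boundary point $z$ and then Harnack on $B_{R/2}(x_0)$ does not quite close, since the point realizing $\sup_{B_{2R}(z)}u$ need not lie in the ball where Harnack applies; the standard fix (and what the cited corollary actually does) is to apply the nondegeneracy lemma centered at $x_0$ itself on balls $B_\rho(x_0)$, $\rho<R=d(x_0,\partial\{u>0\})$, where $u>0$, and then use the mean value property of the harmonic function $u$.
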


\begin{proof}
	The lower bound follow from \cite[Corollary 3.6]{AltCaffarelli}, and the sharp upper bound is a consequence of the estimate $|Du|\le 1$ (see, e.g. \cite[Section 2.2]{JerisonSavin}).
\end{proof}

We will also make important use of the classification of entire minimizers.

\begin{theo}[\cite{JerisonSavin}]\label{theo.jerison.savin}
	Assume $n\le 4$. Let $u\in W^{1,2}_\textnormal{loc} (\RR^n)$ be an entire minimizer of $J$. Then $u=(x\cdot \textbf{n})_+$ for some unit vector $\textbf{n}$.
\end{theo}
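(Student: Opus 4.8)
The plan is to reduce the classification of entire minimizers to that of $1$-homogeneous minimizers (minimizing cones) via the Weiss monotonicity formula, and then to classify minimizing cones by a stability argument on the spherical link, the case $n=4$ being the essential one. Recall the \emph{Weiss monotonicity formula} for $J$: for a minimizer $u$ on $B_R(x_0)$ and $0<r<R$, the quantity
\[
W_u(x_0,r)=\frac{1}{r^n}\int_{B_r(x_0)}\big(|Du|^2+\chi_{\{u>0\}}\big)\,dx-\frac{1}{r^{n+1}}\int_{\partial B_r(x_0)}u^2\,d\cH^{n-1}
\]
is nondecreasing in $r$, and is constant on an interval precisely when $u$ is $1$-homogeneous about $x_0$ there. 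Combining Lemma~\ref{lemm.C0.bound.on.AC.minimizer} (which gives the Lipschitz bound $|Du|\le 1$ and nondegeneracy) with minimality, for a nontrivial entire minimizer $u$ with $0\in\partial\{u>0\}$ the function $r\mapsto W_u(0,r)$ is bounded on $(0,\infty)$, and rescalings $u(r_kx)/r_k$ with $r_k\to 0$ (resp.\ $r_k\to\infty$) subconverge in $W^{1,2}_{loc}\cap C^0_{loc}$ to minimizers of $J$ which, by the rigidity case of the monotonicity formula, are $1$-homogeneous. Hence, once it is known that every $1$-homogeneous entire minimizer on $\RR^n$ equals $(x\cdot\mathbf{n})_+$, both the blow-up and the blow-down of $u$ at $0$ have Weiss density equal to the universal half-plane constant $\alpha_n:=W_{(x\cdot\mathbf{n})_+}(0,1)$, so $W_u(0,0^+)=\alpha_n=W_u(0,\infty)$; monotonicity then forces $W_u(0,\cdot)\equiv\alpha_n$, whence $u$ is itself $1$-homogeneous about $0$ and thus $u=(x\cdot\mathbf{n})_+$. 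It therefore suffices to show: for $2\le n\le 4$, every $1$-homogeneous minimizer of $J$ on $\RR^n$ is $(x\cdot\mathbf{n})_+$.

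By Federer dimension reduction applied to the singular set of $\partial\{u>0\}$, a non-flat $1$-homogeneous minimizer in the lowest dimension admitting one has an isolated singularity at the origin, so it is enough to rule out $1$-homogeneous minimizers $u$ on $\RR^n$ ($2\le n\le 4$) with $\Gamma=\partial\{u>0\}$ smooth away from $0$. Writing $u=r\,\phi(\omega)$ in polar coordinates, harmonicity of $u$ on $\{u>0\}$ gives $\Delta_{S^{n-1}}\phi=-(n-1)\phi$, so $\phi$ is a first Dirichlet eigenfunction of the link $\Sigma=\{u>0\}\cap S^{n-1}$ with $\lambda_1(\Sigma)=n-1$, while the free boundary condition $|Du|=1$ becomes the overdetermined condition $|\nabla_{S^{n-1}}\phi|=1$ on $\partial\Sigma$; the goal is to deduce that $\Sigma$ is a hemisphere. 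For $n=2$ the eigenvalue constraint alone forces $\Sigma$ to be a half-circle. For $n=3$ I would run the Caffarelli--Jerison--Kenig argument, combining Gauss--Bonnet with sharp (Bol-type) spherical isoperimetric inequalities on $\Sigma\subset S^2$ and the overdetermined condition. For $n=4$ I would use that minimizers are stable and carry out the Jerison--Savin Simons-type computation: test the second-variation inequality for $J$ (which, for the cone, couples a Dirichlet-energy term on $\Gamma$ with a curvature term and a boundary term coming from the harmonic extension of the perturbation) with functions built from the coordinate functions restricted to $\Gamma$, and combine the resulting identity with Pohozaev/Rellich identities for $\phi$ to force $|A_\Gamma|\equiv 0$. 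In every case $\Sigma$ is a hemisphere and $u=(x\cdot\mathbf{n})_+$.

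The hard part is the dimension $n=4$ case, i.e.\ the Jerison--Savin stability computation. The difficulty is that the one-phase problem is \emph{not} a minimal surface problem: $\Gamma$ is not minimal, and a competitor must deform $\Gamma$ and the harmonic function $u$ simultaneously, so the second variation carries extra boundary terms governed by a Dirichlet-to-Neumann/Rellich identity rather than the ordinary Jacobi operator. A naive Simons-type inequality is then too weak; the argument closes only for $n\le 4$, and only after a sharp choice of test function exploiting the precise normalizations $\lambda_1(\Sigma)=n-1$ and $|\nabla_{S^{n-1}}\phi|=1$. By contrast, the blow-up/blow-down reduction of the first paragraph is routine once the monotonicity formula and Lemma~\ref{lemm.C0.bound.on.AC.minimizer} are in hand, and the cases $n=2,3$ reduce to an ODE computation and a two-dimensional conformal--isoperimetric argument, respectively.
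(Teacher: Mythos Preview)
Your proposal is correct and follows the same approach as the paper: reduce the entire case to the $1$-homogeneous case via the Weiss monotonicity formula and a blow-up/blow-down argument, and invoke the Jerison--Savin classification of $1$-homogeneous minimizers. The paper's proof is a two-sentence version of your first paragraph (citing \cite{JerisonSavin} for cones and calling the reduction ``an obvious blow-down argument and the Weiss monotonicity''); your additional sketch of the Caffarelli--Jerison--Kenig and Jerison--Savin arguments for the $1$-homogeneous case goes beyond what the paper supplies, since the paper treats that part as a black-box citation.
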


\begin{proof}
	The classification of $1$-homogenous minimizers is due to \cite{JerisonSavin}.  The same classification holds for entire minimizers that are not a priori $1$-homogenous by an obvious blow-down argument and the Weiss monotonicity \cite{Weiss}.
\end{proof}

\section{The case $n=3$}

\begin{prop}\label{prop.stable.spectral}
	Suppose $\Omega\subset\RR^{n+1}_+$ is a cone, stationary and stable for $\cA^\theta$, and $M=\partial \Omega\cap\RR^{n+1}_+$. Assume that $M$ has an isolated singularity so that $\Sigma = M\cap S^{n}(1)$ is smooth. Then we have that
	\begin{equation}\label{eq:stable.spectral}
	\inf_{f\in C^1(\Sigma)} \frac{\int_\Sigma \left(|\nabla_\Sigma f|^2 - |A_\Sigma|^2f^2\right) - \cot\theta \int_{\partial\Sigma} A_\Sigma (\eta,\eta)f^2 }{\int_\Sigma f^2}\ge -\left(\frac{n-2}{2}\right)^2.
	\end{equation}
\end{prop}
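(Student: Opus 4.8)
The plan is to convert the stability inequality \eqref{eq:stable.euclidean} on the cone $M$ into a spectral lower bound on the link $\Sigma = M \cap S^n(1)$ by inserting a well-chosen one-parameter family of test functions of separated-variables type. First I would write points of $M \setminus \{0\}$ as $x = r\omega$ with $r = |x| > 0$ and $\omega \in \Sigma$, and recall the standard cone identities: $|A_M|^2(r\omega) = r^{-2}|A_\Sigma|^2(\omega)$, the area element is $r^{n-1}\,dr\,d\cH^{n-1}(\omega)$, and along $\partial M$ one has $A_M(\eta,\eta)(r\omega) = r^{-1}A_\Sigma(\eta,\eta)(\omega)$ with boundary measure $r^{n-2}\,dr\,d\cH^{n-2}$. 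For a test function of the form $f(r\omega) = \varphi(r)\psi(\omega)$ with $\varphi$ compactly supported in $(0,\infty)$ and $\psi \in C^1(\Sigma)$, the gradient on the cone splits as $|\nabla_M f|^2 = \varphi'(r)^2\psi(\omega)^2 + r^{-2}\varphi(r)^2|\nabla_\Sigma \psi|^2$.

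Next I would substitute this $f$ into \eqref{eq:stable.euclidean}. Separating the radial and spherical integrals, the bulk term becomes
\[
\int_0^\infty \varphi'(r)^2 r^{n-1}\,dr \int_\Sigma \psi^2 + \int_0^\infty \varphi(r)^2 r^{n-3}\,dr \int_\Sigma \left(|\nabla_\Sigma\psi|^2 - |A_\Sigma|^2\psi^2\right),
\]
and the boundary term contributes $-\cot\theta \int_0^\infty \varphi(r)^2 r^{n-3}\,dr \int_{\partial\Sigma} A_\Sigma(\eta,\eta)\psi^2$. Thus if we set
\[
\mu = \inf_{\psi \in C^1(\Sigma)} \frac{\int_\Sigma \left(|\nabla_\Sigma\psi|^2 - |A_\Sigma|^2\psi^2\right) - \cot\theta\int_{\partial\Sigma}A_\Sigma(\eta,\eta)\psi^2}{\int_\Sigma \psi^2},
\]
choosing $\psi$ to nearly realize $\mu$ reduces stability to the one-dimensional inequality $\int_0^\infty \varphi'^2 r^{n-1}\,dr + \mu \int_0^\infty \varphi^2 r^{n-3}\,dr \ge 0$ for all compactly supported $\varphi$. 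The optimal constant in the Hardy-type inequality $\int_0^\infty \varphi'^2 r^{n-1}\,dr \ge \big(\tfrac{n-2}{2}\big)^2 \int_0^\infty \varphi^2 r^{n-3}\,dr$ — proved by the substitution $\varphi(r) = r^{-(n-2)/2}\phi(\log r)$ or by completing the square with the ansatz $\varphi = r^{-(n-2)/2}$ — then forces $\mu \ge -\big(\tfrac{n-2}{2}\big)^2$, which is exactly \eqref{eq:stable.spectral}.

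The one genuine subtlety, which I would treat carefully, is the admissibility of the test functions near the origin: $M$ has an isolated singularity at $0$, so to apply \eqref{eq:stable.euclidean} I need $f \in C^1_0(M)$ with support bounded away from $0$ (and decaying at infinity), and I must check that the infimum in \eqref{eq:stable.spectral} over $C^1(\Sigma)$ is not lowered by a more clever radial profile — this is precisely why the sharp Hardy constant $\big(\tfrac{n-2}{2}\big)^2$ appears and cannot be improved. Concretely, one takes $\varphi$ to be a logarithmic cutoff, $\varphi(r) = r^{-(n-2)/2}$ on a large dyadic annulus and tapered to zero logarithmically at both ends, so that $\int \varphi'^2 r^{n-1}$ exceeds $\big(\tfrac{n-2}{2}\big)^2 \int \varphi^2 r^{n-3}$ by an error that vanishes as the annulus widens; letting the annulus exhaust $(0,\infty)$ and optimizing $\psi \to \mu$ yields the claim. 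I expect this cutoff/error-control step to be the main technical point, though it is entirely standard; everything else is the routine cone computation above.
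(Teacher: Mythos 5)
Your proposal is correct and is essentially the argument behind the paper's proof: the paper simply cites \cite[Lemma 4.5]{CaffarelliHardtSimon} (noting that the capillary Robin condition is preserved under homogeneous extension), and that lemma's proof is exactly your separation-of-variables computation, with the boundary term separating with the same $r^{n-3}$ weight and the sharp Hardy constant $\left(\tfrac{n-2}{2}\right)^2$ realized by logarithmic cutoffs of $r^{-(n-2)/2}$. Your version, which plugs in near-minimizers of the link Rayleigh quotient directly instead of the first Robin eigenfunction, is a self-contained and equally valid rendering of the same idea.
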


\begin{proof}
	This follows from \cite[Lemma 4.5]{CaffarelliHardtSimon}. Note that any homogeneous extension of the first eigenfunction with the capillary boundary condition (i.e.\ $\frac{\partial f}{\partial \eta} = \cot\theta A(\eta,\eta)f$) satisfies the same boundary condition.
\end{proof}

\begin{proof}[Proof of Theorem \ref{thm:main.cone} when $n=3$]
	When $n=3$, \eqref{eq:stable.spectral} becomes
	\[\int_{\Sigma} (|\nabla_\Sigma f|^2 - |A_\Sigma|^2f^2 ) - \cot\theta \int_{\partial \Sigma} A_\Sigma (\eta,\eta)f^2 \ge -\frac 14 \int_\Sigma f^2,\]
	for every $f\in C^1(\Sigma)$. Setting $f=1$ and using the traced Gauss equation to write $|A_\Sigma|^2 = 1-K_\Sigma +\tfrac 12 |A_\Sigma|^2$, we have that
	\[\int_\Sigma K_\Sigma - \cot\theta \int_{\partial \Sigma} A_\Sigma(\eta,\eta) \ge \frac 34 \cH^2(\Sigma) +\frac 12 \int_{ \Sigma} |A_\Sigma|^2.\]
	Using \cite[(3.8)]{LiPolyhedron}, one finds that $\cot\theta A_\Sigma(\eta,\eta) = -k_g$, where $k_g$ is the geodesic curvature of $\partial \Sigma$ with respect to the outward unit normal vector field. Thus, the above gives
	\[\frac 34 \cH^2(\Sigma) \le \int_ \Sigma K_\Sigma + \int_{\partial \Sigma} k_g = 2\pi \chi(\Sigma).\]
	In particular, we have that $\chi(\Sigma)>0$ and thus $\Sigma$ is a topological disk. 
	
	We then proceed exactly as in \cite[Theorem 2.2]{FraserSchoen}. Precisely, let $u: D\to S^{n}_+$ be the proper minimal embedding such that $\Sigma = u(D)$. Take the standard complex coordinates $z=x_1+ix_2$, and consider the function
	\[\phi(z) = A_\Sigma\left(\frac{\partial}{\partial z},\frac{\partial}{\partial z}\right)^2.\]
	Here $A_\Sigma(\partial_z, \partial_z)$ is the second fundamental form in these coordinates, given by $A_\Sigma(\partial_z,\partial_z) = \langle \nabla_{du(\partial_z)} du(\partial_z), \nu\rangle$. By \cite[Theorem 2.2]{FraserSchoen}, we have that $\phi$ is a holomorphic function in $D$. Now consider the boundary behavior of $\phi$. For this, take $(r,\theta)$ the polar coordinates on $D$, so that $z=re^{i\theta}$. Set $w=\log z = \log r+i\theta$. Then $A(\partial_z,\partial_z) = \frac{1}{z^2}A(\partial_w,\partial_w)$. Along $\partial \Sigma$, we have that 
	\[du(\pa{}{r}) = \lambda \eta = \lambda(\cos\theta\bar \eta + \sin \theta \bar \nu),\quad \nu = \cos\theta \bar \nu - \sin\theta \bar \eta,\]
	
	here $\lambda = |du(\pa{}{r})|$. Consequently, we conclude that
	\[A_\Sigma \left(\pa{}{r},\pa{}{\theta}\right) = \lambda \left\langle \nabla_{du(\pa{}{\theta})} (\cos\theta \bar \eta +\sin \theta \bar \nu), \cos\theta \bar \nu - \sin\theta \bar \eta \right\rangle = 0,\]
	because $|\bar \nu|=|\bar \eta| =1$ and $\partial S^n_+$ is totally geodesic. Consequently, the same argument as in \cite[Theorem 2.2]{FraserSchoen} shows that $z^4 \phi(z)$ vanishes identically on $D$, and that $\Sigma$ is totally geodesic. 	 
\end{proof}

\section{The case $n=4$ with $\theta$ close to $0$}
In this section, we consider $\Omega \subset \RR^{n+1}_+$ that minimize the functional
\[\cA^\theta(\Omega) = |\partial^* \Omega \cap \RR^{n+1}_+| - \cos\theta |\partial^* \Omega\cap \partial_+\RR^{n+1}|\]
in $B_1$ or (later) in compact subsets of $\RR^{n+1}$.  Here $\theta\in (0,\frac{\pi}{2}]$.  We say $\Omega$ is a smooth minimizer of $\cA^\theta$ in $U$ if $M := \del \Omega \cap \R^{n+1}_+$ is a smooth hypersurface in $U$, which extends smoothly up to the solid boundary $\del \R^{n+1}_+$.

Most of our work in this Section is focused on proving the following convergence theorem, which is essentially a generalization/refinement of Theorem \ref{thm:graphical.convergence.to.AC.minimizer}.
\begin{theo}\label{thm:general-blow-up}
	Let $\theta_i\to 0+$, $\Omega_i$ be a sequence of minimizers of $\cA^{\theta_i}$ in $B_1$ which are smooth (resp. conical with an isolated singularity at $0$), and $M_i=\partial \Omega_i\cap \RR^{n+1}_+$. Then for a dimensional constant $\eps(n)$ and $i$ sufficiently large, $M_i \cap B_{1/2} \cap B_\eps(\R^n)$ (resp. $M_i \cap B_{1/2}$) is contained in the graph of a Lipschitz function $u_i$ over $B_{1/2}^n \equiv \partial \RR^{n+1}_+ \cap B_{1/2}$. Moreover, up to a subsequence, $\theta_i^{-1} u_i$ converges in $(W^{1,2}_{loc}\cap C^\alpha_{loc})(B_{1/4}^n)$ to a minimizer $v$ (resp. one-homogenous minimizer $v$) to the Alt--Caffarelli functional $J$ for all $\alpha\in (0,1)$, and the free-boundaries $\partial \{ u_i > 0 \} \to \partial \{ v > 0 \}$ in the local Hausdorff distance in $B_{1/4}^n$.
	
	If $n \leq 4$, then $v$ is entirely regular (resp. linear), and the convergence $\theta_i^{-1} u_i \to v$ is $C^{2,\alpha}_{loc}$ in $B_{1/4}^n$.  Near the free-boundary this is interpreted in the sense of the Hodograph transform.
\end{theo} 

\begin{rema}\label{rem:curvature-est}
	More generally, in Theorem \ref{thm:general-blow-up} (and Lemma \ref{lem:curvature-est}) instead of assuming $n \leq 4$ one can assume $n + 1 \leq k^*$, where $k^*$ is the smallest dimension in which there is a non-linear $1$-homogenous minimizer of $J$ in $\R^{k^*}$.  The value of $k^*$ is not yet known, but by the works of \cite{JerisonSavin}, \cite{DeSilvaJerison}, $k^* \in \{ 5, 6, 7 \}$.
\end{rema}

\subsection{Preliminaries}

Our first Lemma says that if the interior surface $M := \partial\Omega \cap \RR^{n+1}_+$ is smooth (up to the boundary), then in a region uniform far away from the interface $\partial M \cap B_1 \subset \partial \RR^{n+1}$, $M$ is mass-minimizing in the sense of boundaries.

\begin{lemm}\label{lemm:min.boundary}
	Let $\Omega$ be a smooth minimizer of $\cA^\theta$ in $B_1$, and let $M = \partial \Omega \cap B_1 \cap \RR^{n+1}_+$, $E = \Omega \cup (\pi^{-1}(\partial \Omega \cap \partial \RR^{n+1}_+) \setminus \RR^{n+1}_+ )$, and $S = \pi^{-1}(\del M \cap \partial \RR^{n+1}_+ \cap B_1)$.  Then	\[\partial [E] \llcorner \left(B_1 \setminus S \right) \equiv [M ] \]
	is a mass-minimizing boundary in $B_1\setminus S$.
\end{lemm}

\begin{proof}
	By construction we have $\partial E = M$ in $B_1 \setminus S$.  Let $F$ be a Caccioppoli set in $B_1$ such that $F\Delta E$ is contained in a compact subset $W$ of $B_1\setminus S$. Without loss of generality (by replacing $W$ by a compact set containing it), we can assume that $W=\pi^{-1}(W_0)\cap B_r$ for $W_0\subset \RR^n$ and $r<1$.  Since $M \equiv \partial\Omega \cap B_1 \cap \RR^{n+1}_+$ is a smooth minimal hypersurface and (by the maximum principle) meets $\partial \RR^{n+1}_+$ only at $\partial M$, there is an $\eps > 0$ so that
	\begin{equation}\label{eqn:min.boundary-1}
	d(x, \partial \RR^{n+1}_+) \geq 2\eps \quad \forall x \in \partial\Omega \cap W.
	\end{equation}
	
	Define the piecewise linear map $p_\eps(x_1, \ldots, x_{n+1}) = (\max \{ x_1, \eps\}, x_2, \ldots, x_{n+1})$, and define $F_\eps = F \cup (\{ x_1 > \eps\} \cap B_1)$.  We have $\partial^* F_\eps = p_\eps(\partial^* F)$ and $p_\eps(\partial^* F) \cap W = p_\eps(\partial^*F \cap W)$, and so since $\Lip(p_\eps) \leq 1$ we get the inequality $|\partial^* F_\eps \cap W| \leq |\partial^* F \cap W|$.
	From \eqref{eqn:min.boundary-1} we have $F_\eps \Delta \Omega \subset\subset W \cap \{ x_1 > 0 \}$.  Therefore if we set $\Omega' = F_\eps \cap \RR^{n+1}_+$ we can make the comparison $\cA_{W}^\theta(\Omega) \leq \cA_{W}^\theta(\Omega')$.  But since $\partial^*\Omega' \cap \partial\RR^{n+1}_+ = \partial^*\Omega \cap \partial\RR^{n+1}_+$, we can deduce the inequalities
	\[
	|\partial^*E \cap W| = |\partial^*\Omega \cap \RR^{n+1}_+ \cap W| \leq |\partial^* \Omega' \cap \RR^{n+1}_+ \cap W| = |\partial^*F_\eps \cap W| \leq |\partial^* F \cap W|. \qedhere
	\]
\end{proof} 

We next recall the following application of the Allard regularity theorem.

\begin{theo}\label{thm:allard}
	There is an $\eps_1(n)$ such that the following holds. Let $T=\partial [E]$ be a mass-minimizing boundary in $B_r$ such that
	\[  0\in \spt T, \quad \sup_{\spt T\cap B_r} r^{-1} |x_1|<\eps\le \eps_1.\]
	Then $\spt T\cap B_{r/2}=\graph_{\RR^n}(u)$ for some $C^2$ function $u$ with the estimate \[r|D^2u| + |Du| + r^{-1}|u|\le c_1(n)\eps , \quad\text{in }B_{r/2}.\]
\end{theo}

\begin{proof}
	The estimate is scaling invariant so we assume $r=1$ without loss of generality.  Assume the Proposition fails: then for any predetermined $c(n)$, there exists a sequence of mass-minimizing boundaries $T_j=\partial[E_j]$ and numbers $\eps_j\to 0$, so that $0 \in \spt T_j$ and $\sup_{ \spt T_j \cap B_1 } |x_1| \leq \eps_j$, but $\spt T_j \cap B_{1/2}$ is not the graph of a $C^2$ function over $\{ x_1 = 0 \}$ satisfying $|u|_{C^2} \leq c(n) \eps_j$.
	
	Passing to a subsequence, by compactness of mass-minimizing boundaries we can assume $E_j \to E$ and $T_j \to T = \partial [E]$ for some mass-minimizing boundary $T$.  However our contradiction hypothesis implies $0 \in \spt T$ and $\spt T \subset \{ x_1 = 0\}$.  Therefore we must have $T = \pm \partial [\{x_1 < 0\}]$, and hence $T_j \to [\{ x_1 = 0 \}]$ with multiplicity-one.  Allard's them implies that for $j \gg 1$, $\spt T_j \cap B_{1/2} = \graph_{\RR^n}(u)$ with $|u|_{C^2} \leq c(n) \sup_{\spt T_j \cap B_{3/4}} |x_1| \leq c(n) \eps_j$, which is a contradiction.
\end{proof}

We recall the Harnack inequality for harmonic functions on mass-minimizing boundaries by Bombieri--Guisti \cite{BombieriGuisti}.

\begin{theo}[{\cite[Theorem 5]{BombieriGuisti}}]\label{thm:harnack}
	Let $T=\partial [E]$ be a mass-minimizing boundary in $B_1^{n+1}$. There are constants $\sigma(n)\in (0,1)$, $c_2(n)$ such that if $u\in C^1(B_1)$ satisfies 
	\[\int \nabla u\cdot \nabla \phi d\mu_T=0,\quad \forall \phi\in C_0^1(B_1), u>0 \text{ on }\spt T,\]
	then 
	\[\sup_{\spt T\cap B_{\sigma} } u\le c_2 \inf_{ \spt T\cap B_{\sigma}} u.\]
\end{theo}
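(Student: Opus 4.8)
The plan is to prove this by running the De Giorgi--Nash--Moser iteration scheme on the $n$-dimensional surface $\Sigma := \spt T$ equipped with the measure $\mu_T$, treating the equation $\int \nabla u\cdot\nabla\phi\, d\mu_T = 0$ as a uniformly elliptic divergence-form equation on the metric measure space $(\Sigma,\mu_T)$. Since the statement and the iteration are scale-invariant, I would normalize to $B_1$ and aim to produce all constants depending only on $n$.

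First I would collect the three geometric/analytic inputs that make Moser iteration go through with dimensional constants. (i) \emph{Ahlfors regularity and volume doubling of $\mu_T$}: by the monotonicity formula for mass-minimizing boundaries the density ratios $r^{-n}\mu_T(B_r(x))$ are monotone and pinched between two positive dimensional constants for $x\in\Sigma$ and $B_r(x)\subset B_1$; in particular $\mu_T$ is doubling. (ii) A scale-invariant \emph{Sobolev inequality} on $\Sigma$: this is the Michael--Simon Sobolev inequality, applicable because $\Sigma$ is a smooth minimal hypersurface away from a closed singular set of Hausdorff dimension $\le n-7$, which has vanishing $(n-2)$-capacity and hence does not obstruct integration by parts; alternatively one follows \cite{BombieriGuisti} and derives it from a relative isoperimetric inequality on $\Sigma$ proved by comparison with cones. (iii) A scale-invariant \emph{Poincar\'e inequality}: for $f\in C^1$ and $\overline f$ its $\mu_T$-average over $\Sigma\cap B_r$,
\[
\frac{1}{\mu_T(B_r)}\int_{\Sigma\cap B_r} |f-\overline f|\, d\mu_T \le c(n)\, r\,\frac{1}{\mu_T(B_{2r})}\int_{\Sigma\cap B_{2r}} |\nabla f|\, d\mu_T,
\]
which I would obtain from the relative isoperimetric inequality on $\Sigma$ together with (i).

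With these in hand the iteration is standard. Testing the equation with $\phi=\eta^2 u^{2\beta+1}$ for a cutoff $\eta$ gives Caccioppoli estimates for all powers $\beta$; feeding them into the Sobolev inequality and iterating over dyadic scales yields the two one-sided bounds
\[
\sup_{\Sigma\cap B_\sigma} u \le c(n)\left(\frac{1}{\mu_T(B_{2\sigma})}\int_{\Sigma\cap B_{2\sigma}} u^{p}\, d\mu_T\right)^{1/p},\qquad
\left(\frac{1}{\mu_T(B_{2\sigma})}\int_{\Sigma\cap B_{2\sigma}} u^{-p}\, d\mu_T\right)^{-1/p} \le c(n)\,\inf_{\Sigma\cap B_\sigma} u,
\]
for some small $p=p(n)>0$ (the reverse-H\"older step for negative powers uses that $u>0$ on $\Sigma$, so $u^{-1}$ is locally bounded). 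The remaining link --- that the two $\mu_T$-means above are comparable for small $p$ --- follows from a John--Nirenberg argument: testing the equation with $\phi=\eta^2 u^{-1}$ gives $\int_\Sigma |\nabla\log u|^2\eta^2\, d\mu_T \le c\int_\Sigma|\nabla\eta|^2\, d\mu_T$, so $\log u$ has bounded mean oscillation with respect to $\mu_T$, and then the Poincar\'e inequality and doubling give $\frac{1}{\mu_T(B_{2\sigma})}\int_{\Sigma\cap B_{2\sigma}} e^{p|\log u-\overline{\log u}|}\, d\mu_T \le c(n)$ for $p$ small. Composing the three displays yields $\sup_{\Sigma\cap B_\sigma} u \le c(n)\inf_{\Sigma\cap B_\sigma} u$, which is the claim after shrinking $\sigma$ once in the course of the iteration.

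The main obstacle is input (iii): establishing the Poincar\'e inequality on $\Sigma$ with a purely dimensional constant, uniformly across all scales and all mass-minimizing boundaries. This is exactly where the \emph{minimizing} hypothesis --- not mere stationarity --- is used; the device, going back to \cite{BombieriGuisti}, is a relative isoperimetric inequality on $\Sigma$ obtained by a competitor-cone comparison, and it is the part demanding the most care. A secondary technical nuisance is the rigorous treatment of the singular set of $\Sigma$ in all the integrations by parts, but since $u\in C^1(B_1)$ is globally defined and $\dim\sing\Sigma\le n-7$, this is handled by an elementary capacity cutoff and causes no essential difficulty.
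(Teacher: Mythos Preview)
The paper does not give its own proof of this theorem; it is quoted verbatim as \cite[Theorem~5]{BombieriGuisti} and used as a black box. Your outline is correct and is precisely the Bombieri--Giusti argument: the Moser iteration runs once one has (i) the density bounds from monotonicity, (ii) a Sobolev inequality, and (iii) the intrinsic Poincar\'e/relative isoperimetric inequality on mass-minimizing boundaries, and you have correctly flagged (iii) as the place where the \emph{minimizing} (not merely stationary) hypothesis enters.
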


\begin{rema}\label{rema:DSS}
	It would be interesting to prove a version of Theorem \ref{thm:harnack} on the reduced boundary of a capillary minimizer, which holds up to the boundary (cf.\ \cite{DeSilvaSavin}). This would simplify the arguments below.
\end{rema}

Our final preliminary lemma rules out a minimizer $\Omega$ of $\cA^\theta$ having large pieces that stay too close to the boundary.
\begin{lemm}\label{lem:flat-is-zero}
	There is $\eps_3(n) > 0$ so that the following holds.  Let $\Omega \subset \RR^{n+1}_+$ be a minimizer for $\cA^\theta$ in $B_1$, and $u : B_1^n \to \R$ a non-negative Lipschitz function.  Suppose either: $\Omega \cap B_1 = \{ x \in B_1 : 0 < x_1 < u(\pi(x)) \}$ and $\sup_{B_{1/2}^n} u \leq \eps_3 \theta$; or $\Omega \cap B_1 = \{ x \in B_1 : u(\pi(x)) < x_1 \}$ and $\sup_{B_{1/2}^n} u \leq \eps_3$.  Then $u \equiv 0$ in $B_{1/4}^n$.
\end{lemm}

\begin{proof}
	By allowing for $\theta \in (0, \pi)$ in the below proof, and replacing $\Omega$ with $B_1 \setminus \Omega$, $\theta$ with $\pi - \theta$ as necesary, there will be no loss of generality in assuming we are in the first case.   We loosely follow the ideas of \cite[Lemma 3.4]{AltCaffarelli}.  Let $\phi(r = |x|)$ be the graphing function of the upper half of the standard catenoid, defined on $\{ x \in \R^n : |x| > 1 \}$.  Standard estimates (e.g. \cite{SchoenUniqueness}) imply that as $r \to \infty$, 
	\[
	\phi(r) = \left\{ \begin{array}{l l} a \log(r) + O(1) & n = 2 \\ b_n - d_n r^{2-n} + O(r^{1-n}) & n \geq 3. \end{array} \right. ,
	\]
	for some constants $a, b_n, d_n > 0$.  Choose $\lambda(n, \eps, \theta), \mu(n, \eps, \theta) > 0$ (and ensure $\eps(n)$ is small) so that if
	\[
	v = \max\{ \lambda \phi(|x|/\lambda) - \mu, 0\},
	\]
	then $v(r = 1/4) = 0$ and $v(r = 1/2) = 2\eps \theta$.  We note it then follows from our choice of $\lambda$, the structure of $\phi$ and the smallness of $\eps(n)$ that $|\partial_r v|_{r = 1/4}| \leq c(n) \eps \theta$.
	
	The graph of $v$ is a scaled and translated portion of the catenoid.  Write $\nu$ for the unit normal vector of $\graph_{\RR^n}(v)$ such that $\nu\cdot e_1>0$, and extend $\nu$ to be defined on all of $\RR^{n+1}$ so it is constant in the $x_1$ direction. A standard computation shows that in $\{ x \in \RR^{n+1} : |\pi(x)| \geq 1/4 \}$, the $n$-form $\omega := \star \nu$ is a calibration for $\graph_{\RR^n}(v)$. Let us define the hypersurface
	\[
	S = \{ x = (x_1, x') \in \RR \times \RR^n : |x'| = 1/4 \text{ and } 0 < x_1 < u(x') \},
	\]
	endowed with the inward-pointing orientation, and define the cylinder
	\[
	U = \{ x = (x_1, x') \in \RR \times \RR^n : u(x') > v(x')\text{ and } x' \in B_{1/2}^n \setminus B_{1/4}^n \},
	\]
	so that, because $v > u$ on $\partial B_{1/2}^n$,
	\[
	\partial U = [\graph_{\RR^n}(u) \cap U] - [\graph_{\RR^n}(v) \cap U] + S
	\]
	(here we equip the graphs with an orientation so its unit normal vectors point into positive $x_1$ direction). Now since $\min(u, v)$ is a Lipschitz function which agrees with $u$ along $\partial B_{1/2}^n$, we can make the comparison
	\[
	\cA^\theta(\Omega) \leq \cA^\theta(\{ 0 < x_1 < \min ( u(\pi(x)), v(\pi(x)) ) \})
	\]
	and thereby compute:
	\begin{align}
	&\int_{B_{1/4}^n \cap \{ u > 0 \}} \sqrt{ 1 + |Du|^2} - \cos \theta dx
	\leq \int_{ \{ u > 0 \} \cap B_{1/2}^n \setminus B_{1/4}^n} \sqrt{1 + |D\min(u, v)|^2} - \sqrt{1 + |Du|^2} dx \nonumber 
	\\
	&= \mathcal{H}^n( \graph_{\RR^n}(v) \cap U) - \mathcal{H}^n(\graph_{\RR^n}(u) \cap U) \nonumber 
	\leq \int_{\graph_{\RR^n}(v) \cap U} \omega - \int_{\graph_{\RR^n}(u) \cap U} \omega \nonumber  
	\\
	&\hspace{2cm}= \int_S \omega  = \int_{\partial B_{1/4}^n} \frac{\partial_r v}{\sqrt{1+|D v|^2}} u 
	\leq c(n) \eps \theta \int_{\partial B_{1/4}^n} u. \label{eqn:flat-is-zero-1}
	\end{align} 
	
	On the other hand, we claim we have the estimate
	\begin{align}\label{eqn:flat-is-zero-2}
	\int_{\partial B_{1/4}^n} u \leq c(n) \theta^{-1} \int_{B_{1/4}^n \cap \{ u > 0 \}} \sqrt{1+|D u|^2} - \cos\theta dx.
	\end{align}
	Combining \eqref{eqn:flat-is-zero-1} with \eqref{eqn:flat-is-zero-2} will give $\int_{\partial B_{1/4}^n} u = 0$ provided $\eps(n)$ is sufficiently small, which by a trivial comparison argument implies $u = 0$ on $B_{1/4}$.

	To prove \eqref{eqn:flat-is-zero-2}, we break into two cases.  For $\theta \in [0, \pi/2]$, we first note that we have the inequalities
	\begin{align*}
	2\sqrt{1+|Du|^2} - |Du| \theta - 1 &\geq \sqrt{4 - \theta^2} - 1 \\
	&\geq 1-\theta^2/3 \\
	&\geq 1-\theta^2/2+\theta^4/24 \\
	&\geq \cos\theta
	\end{align*}
	The first inequality follows by considering the minimum of the function $t \mapsto 2 \sqrt{1+t^2}- t \theta - 1$ on $\{ t \geq 0 \}$; the second and the third hold for $\theta \in [0, \frac{\pi}{2}]$ by elementary algebra; the fourth holds by the Taylor remainder theorem.
	
	Now using the trace inequality, then our bound $|u| \leq \eps \theta$, and then the above inequality, we can estimate
	\begin{align*}
	\int_{\partial B_{1/4}^n} u 
	&\leq c(n) \int_{B_{1/4}^n} (|D u| + u )dx \\
	&\leq c(n) \theta^{-1} \int_{\{ u > 0 \} \cap B_{1/4}^n} (|D u| \theta + 1- \cos\theta)  dx \\
	&\leq 2c(n) \theta^{-1} \int_{\{ u > 0 \}  \cap B_{1/4}^n} \left(\sqrt{1 + |Du|^2} - \cos\theta \right) dx.
	\end{align*}
	This proves the assertion when $\theta \in [0, \pi/2]$.  When $\theta \in [\pi/2, \pi]$, then we can obtain the last two inequalities by the fact $\cos\theta \leq 0$ and the inequality $t + 1 \leq 2 \sqrt{t^2 + 1}$.  This proves the Lemma.
\end{proof}

\subsection{Inhomogenous blow-up to Alt-Caffarelli}

We show here that capillary minimizers of small angle with good a priori estimates near the boundary are graphical, and that the graphing function looks close to a minimizer of the Alt-Caffarelli energy at the scale of the angle.  We first prove that non-degeneracy estimates \eqref{eqn:graph-prop-hyp} near the boundary can be extended to graphical estimates at both near and far from the boundary.  In Section \ref{sec:a priori}, we shall prove using a bootstrap-type argument that estimates like \eqref{eqn:graph-prop-hyp} hold in a neighborhood of uniform size.

\begin{lemm}[Graphical propogation]\label{lem:graph-prop}
	Given $\gamma > 0$, there exists $\theta_0, \eps, c$ positive and depending only on $(n, \gamma)$ so that the following holds. Suppose $\Omega\subset \R^{n+1}_+$ is a smooth minimizer for $\cA^{\theta}$ in $B_1$ for some $\theta \leq \theta_0$, and let $M=\partial \Omega\cap \R^{n+1}_+$. \emph{Assume} that $0 \in \del M$, and
	\begin{equation}\label{eqn:graph-prop-hyp}
	\frac{1}{2c_0}\tan \theta \, d(\pi(x), \partial M)\le x_1\le 2\tan\theta\,  d(\pi(x),\partial M),
	\end{equation}
	for all $x \in M \cap B_\gamma(\del M)$, where $c_0$ as in Lemma \ref{lemm.C0.bound.on.AC.minimizer}.
	
	Then there exists a Lipschitz function $u: B_{1/2}^n\to \R$ and so that the following holds:
	\begin{enumerate}
		\item $\Omega = \{ x : 0 < x_1 < u(\pi(x)) \}$ $\leb^{n+1}$-a.e. in $\{ 0 < x_1 < \eps\} \cap B_{1/2}$; \label{item:graph-prop-1}
		\item $M = \graph_{\R^n}(u)$ in $\{ 0 < x_1 < \eps \} \cap B_{1/2}$; \label{item:graph-prop-2}
		\item $\del M = \del \{ u > 0 \}$ in $B_{1/2}^n$; \label{item:graph-prop-3}
		\item $\mathrm{Lip}(u)\le c\tan\theta$. \label{item:graph-prop-4}
		\item for every $z\in B_{1/2}^n \cap \{ u > 0 \}$, we have \label{item:graph-prop-5}
		\begin{equation}\label{eqn:non-deg}\frac{1}{c}\tan\theta \, d(z,\partial \{u>0\})\le u(z)\le c\tan\theta \, d(z,\partial \{u>0\}).\end{equation}
	\end{enumerate}
\end{lemm}

\begin{rema}
	If $\Omega$ is smooth, then assumptions \eqref{eqn:graph-prop-hyp} will always hold for some $\gamma$.
\end{rema}

\begin{proof}
	Let $\eps_1$, $c_1$ be the constants from Theorem \ref{thm:allard}, $\sigma$, $c_2$ be the constants from Theorem \ref{thm:harnack}, and $\eps_3$ the constant from Lemma \ref{lem:flat-is-zero}.  Define
	\begin{equation}\label{eqn:graph-prop-1}
	\rho = \frac{\sigma\gamma}{8}, \quad K =  \frac{c_0 c_2^{\lceil 1/\rho \rceil}}{\rho}, \quad \eps = \frac{4\eps_1}{K}, \quad \tan \theta_0 = \frac{\rho}{100 c_1 c_2 K} \min \{ \eps, \eps_1, \eps_3 \}.
	\end{equation}
	Here $\lceil 1/\rho \rceil$ is the smallest integer that is larger than or equal to $1/\rho$. We claim that
	\begin{equation}\label{eqn:graph-prop-2}
	M \cap \{ 0 < x_1 < \eps \} \cap B_{3/4} \setminus B_{\gamma/4}(\del M) \subset \{ \tan\theta/K < x_1 < K \tan\theta \}.
	\end{equation}
	
	By our choice of constants, and since $0 \in \del M$, to prove \eqref{eqn:graph-prop-2}, it will suffice to prove by induction that for $N = 0, 1, 2, \ldots, \lceil 1/\rho\rceil$, we have the inclusion
	\begin{align}
	&M \cap \{ \gamma/4 < d(x, \del M) < \gamma/2 + N\rho \} \cap \{ 0 < x_1 < 4\eps_1\rho/c_2^N \} \cap B_{3/4} \nonumber \\
	& \quad \subset \{ (\gamma/8 c_0) \tan\theta / c_2^N < x_1 < 2\gamma c_2^N \tan \theta \}. \label{eqn:graph-prop-3}
	\end{align}
	
	When $N = 0$ then \eqref{eqn:graph-prop-3} trivially holds by our hypothesis \eqref{eqn:graph-prop-hyp}.  Let $N \geq 1$, and suppose by inductive hypothesis \eqref{eqn:graph-prop-3} holds with $N-1$ in place of $N$.  Take $y \in \R^n \cap \del B_{\gamma/2 + (N-1)\rho}(\del M) \cap B_{3/4}$, and consider the ball $B_{4\rho}(y)$.  In $B_{4\rho/\sigma}(y)$, $M$ is a minimizing boundary.  If $M \cap B_{4\rho}(y) \cap B_{4\eps_1 \rho/c_2^N}(\R^n) \neq \emptyset$, then by Theorem \ref{thm:harnack} (with $u(x) = x_1$) we must have $M \cap B_{4\rho}(y) \subset B_{4\eps_1\rho/c_2^{N-1}}(\R^n)$, and hence by Theorem \ref{thm:allard} $M \cap B_{2\rho}(y) = \graph_{\R^n}(u)$ for some smooth function $u$ satisfying $|u| \leq 4\eps_1 \rho/c_2^{N-1}$.
	
	In particular, there is a point $z \in M \cap B_{2\rho}(y) \cap B_{\gamma/2 + (N-1)\rho}(\del M) \cap B_{4\eps_1 \rho/c_2^{N-1}}(\R^n)$, which by our inductive hypothesis satisfies $(\gamma/8c_0)\tan\theta/c_2^{N-1} < z_1 < 2\gamma c_2^{N-1} \tan\theta$.  Using Theorem \ref{thm:harnack} again, we deduce $M \cap B_{2\rho}(y) \cap B_{4\eps_2\rho/c_2^N}(\R^n) \subset \{ (\gamma/8c_0) \tan\theta/c_2^N < x_1 < 2\gamma c_2^N \tan\theta \}$.  Repeating this argument for every admissible $y$ proves \eqref{eqn:graph-prop-3}, and hence \eqref{eqn:graph-prop-2} follows by induction.

	Let $M'= M\cap \{0<x_1 <\eps\}$. Given $y = (y_1, y') \in M' \cap B_{3/4}$ take $r = \min\{ d(y', \del M)/16, \rho \}$, and note that by \eqref{eqn:graph-prop-hyp}, \eqref{eqn:graph-prop-2}, \eqref{eqn:graph-prop-1} we have
	\begin{equation}\label{eqn:graph-prop-4}
	y_1 \leq (K/\rho)  \tan\theta r \leq \frac{\min\{ \eps_1, \eps_3 \}}{100 c_1c_2} r \leq r/100
	\end{equation}
	so that in particular we have $y \in M' \cap B_{r/4}(y')$.  By Theorem \ref{thm:harnack}, we have $M' \cap B_{4r}(y') \subset \{ x_1 <  c_2 (K/\rho) \tan \theta r \}$, and so by \eqref{eqn:graph-prop-4} and Theorem \ref{thm:allard} $M' \cap B_{2r}(y') = \graph_{\R^n}(u)$ for $u : B_r^n(y') \to \R$ a smooth function satisfying
	\begin{equation}\label{eqn:graph-prop-5}
	r^{-1} |u| + |Du| \leq c_1 c_2 (K/\rho) \tan \theta.
	\end{equation}
	By our choice of $\theta_0$, Lemma \ref{lem:flat-is-zero} then implies that
	\begin{equation}\label{eqn:graph-prop-6}
	\Omega \cap B_{r/2}(y') = \{ x \in B_{r/2}(y') : 0 < x_1 < u(\pi(x)) \} .
	\end{equation}
	
	From \eqref{eqn:graph-prop-5}, we deduce that $\pi(M') \cap B_{1/2}^n$ is an open subset of $\R^n$, and $M' \cap B_{1/2} = \graph_{\R^n}(u) \cap B_{1/2}$ for $u : \pi(M') \cap B_{1/2}^n \to \R$ a smooth positive function satisfying
	\begin{equation}\label{eqn:graph-prop-7}
	d(x, \del M) \tan\theta/c \leq u(x) \leq c d(x, \del M) \tan\theta, \quad |Du(x)| \leq c \tan\theta
	\end{equation}
	for all $x \in \pi(M') \cap B_{1/2}^n$, with $c = c(n, \gamma)$.  The upper bounds in \eqref{eqn:graph-prop-7} follow from \eqref{eqn:graph-prop-5}, and the lower bound from \eqref{eqn:graph-prop-hyp}, \eqref{eqn:graph-prop-2}.  By \eqref{eqn:graph-prop-5} and our choice of $\theta_0$, we have
	\begin{equation*}
	\del \pi(M') \cap B_{1/2}^n = \del M \cap B_{1/2}^n,
	\end{equation*}
	and so we can extend $u$ by zero to all of $B_{1/2}^n$ to obtain a Lipschitz function satisfying the required properties (\ref{item:graph-prop-2}), (\ref{item:graph-prop-3}), (\ref{item:graph-prop-4}), (\ref{item:graph-prop-5}).
	
	Finally, we observe that if $\Omega'$ is any connected component of $\Omega \cap \{ 0 < x_1 < \eps \}$, then by \eqref{eqn:graph-prop-hyp} necessarily $\del \Omega' \cap \{ 0 < x_1 < \eps \} \cap B_{3/4}$ is a non-empty subset of $M' \cap B_{3/4}$.  From this and \eqref{eqn:graph-prop-6} we obtain property (\ref{item:graph-prop-1}).
\end{proof}

By a trivial modification of the above proof, we also obtain a version of the graphical propogation for smooth cones.
\begin{lemm}[Graphical propogation for cones]\label{lem:graph-prop-cones}
	Given $\gamma > 0$, there exist $\theta_0(n, \gamma)$, $c(n, \gamma)$ so that the following holds.  Suppose $\theta \leq \theta_0$, and $\Omega \subset \R^{n+1}_+$ is a dilation-invariant minimizer for $\cA^\theta$ which is smooth away from the cone point $0$.  Write $M = \del \Omega \cap \R^{n+1}_+$.  Assume that
	\begin{equation}\label{eqn:graph-prop-cones-hyp}
	\frac{1}{2c_0} \tan\theta d(\pi(x), \del M) \leq x_1 \leq 2 \tan\theta d(\pi(x), \del M)
	\end{equation}
	for all $x \in M \cap B_\gamma(\del M) \cap \del B_1$, where $c_0$ as in Lemma \ref{lemm.C0.bound.on.AC.minimizer}.
	
	Then there exists a $1$-homogenous Lipschitz function $u : \R^n \to \R$ so that:
	\begin{enumerate}
		\item $\Omega = \{ x : 0 < x_1 < u(\pi(x)) \}$ $\leb^{n+1}$-a.e.;
		\item $M = \graph_{\R^n}(u)$ in $\R^{n+1}_+$;
		\item $\del M = \del \{ u > 0 \}$;
		\item $\Lip(u) \leq c \tan\theta$;
		\item for every $z \in \{ u > 0 \}$ we have
		\[
		\frac{1}{c} \tan\theta d(z, \del \{ u > 0 \}) \leq u(z) \leq c \tan\theta d(z, \del \{ u > 0 \}).
		\]
	\end{enumerate}
\end{lemm}

\begin{proof}
	We first note that by the maximum principle in the sphere $M \cap \del B_1$ is necessarily connected, and by the maximum principle for stationary varifold cones we must have $\del M \cap \del B_1 \neq \emptyset$.  By an essentially verbatim argument as Lemma \ref{lem:graph-prop}, we can therefore find a $1$-homogenous (because $M$ is conical) Lipschitz function $u: B_2^n \setminus B_{1/2}^n \to \R$ satisfying properties (\ref{item:graph-prop-1})-(\ref{item:graph-prop-5}) of Lemma \ref{lem:graph-prop} (except with $B_{1/2}^n$ replaced with $B_2^n \setminus B_{1/2}^n$).
	
	Ensuring $\theta(n, \gamma)$ is sufficiently small so that $|u| < \eps$ on $B_2^n \setminus B_{1/2}^n$, property (\ref{item:graph-prop-1}) and connectivity of $M$ implies that in fact (\ref{item:graph-prop-1}), (\ref{item:graph-prop-2}) hold without the restriction on $x_1$.  If we extend $u$ to $\R^n$ by $1$-homgeneity, then (\ref{item:graph-prop-1})-(\ref{item:graph-prop-5}) will continue to hold on $\R^n$, as the estimates in (\ref{item:graph-prop-4}), (\ref{item:graph-prop-5}) are invariant under $1$-homogenous rescaling.
\end{proof}

As $\theta \to 0$, the graphing function of Lemmas \ref{lem:graph-prop}, \ref{lem:graph-prop-cones} resembles more closely a minimizer of the Alt-Caffarelli functional.  This is made precise below.
\begin{prop}[Inhomogenous blow-up]\label{prop:blow-up}
	Let $\theta_\mu>0$ be a sequence approaching $0$, $\Omega_\mu\subset\R^{n+1}_+$ a sequence of (not necessarily smooth!) minimizers for $\cA^{\theta_\mu}$ in $B_1$, and $M_\mu=\partial \Omega_\mu \cap \R^{n+1}_+$. Assume there are fixed $\Gamma, \eps > 0$ and Lipschitz functions $u_\mu : B_1^n \to \R$ satisfying
	\begin{enumerate}
		\item $\Omega_\mu = \{ x : 0 < x_1 < u_\mu(\pi(x)) \}$ in $B_\eps(\R^n) \cap B_1$,
		\item $M_\mu = \graph_{\R^n}(u_\mu)$ in $\{ 0 < x_1 < \eps \} \cap B_1$ , 
		\item $0 \in \del M_\mu = \del \{ u_\mu > 0 \}$ in $\R^n \cap B_1$,  
		\item $\Lip(u_\mu) \leq \Gamma \tan\theta_i$,  
		\item for every $z \in B_1^n \cap \{ u_\mu > 0 \}$, we have the estimate
		\[
		\frac{1}{\Gamma} \tan\theta_\mu d(z, \del \{ u_\mu > 0 \}) \leq u_i(z) \leq \Gamma \tan\theta_\mu d(z, \del \{ u_\mu > 0 \}).
		\]
	\end{enumerate}
	Then up to a subsequence,
	\[v_\mu = \frac{1}{\tan\theta_\mu}u_\mu\]
	converges in $(W^{1,2}_{loc}\cap C^\alpha_{loc})(B_{1/2}^n)$, for all $\alpha<1$, to a Lipschitz function $v : B_{1/2}^n \to \R$ that minimizes the Alt--Caffarelli functional $J$ in $B_{1/2}^n$, and the free-boundaries $\partial \{ v_\mu > 0 \} \to \partial \{ v > 0 \}$ in the local Hausdorff distance in $B_{1/2}^n$.
	
	If, \emph{additionally}, one assumes the $M_\mu$ are smooth and admit an upper bound of the form
	\begin{equation}\label{eqn:blow-up-hypA}
	\sup_\mu \sup_{B_{1}} \theta_\mu^{-1} |A_{M_\mu}| < \infty,
	\end{equation}
	then $\del \{ v > 0 \} \cap B_{1/2}$ is entirely regular, and the convergence $v_\mu \to v$ is $C^{2,\alpha}_{loc}(B_{1/2}^n)$ in the sense of Hodograph transforms.  In particular, we get
	\begin{equation}\label{eqn:blow-up-conclA}
	\sup_{B_r} \theta_\mu^{-1} |A_{M_\mu}| \to \sup_{B_r^n \cap \{ v > 0 \}} |D^2 v| \quad \forall r < 1/2.
	\end{equation}
\end{prop}

\begin{rema}
	We shall show in Lemmas \ref{lem:bootstrap}, \ref{lem:graph-prop} that hypotheses (1)-(5) will always hold for any sequence of \emph{smooth} minimizers with $\theta_\mu \to 0$.  Additionally, we shall show in Lemma \ref{lem:curvature-est} that \eqref{eqn:blow-up-hypA} always holds for a sequence of smooth minimizers of dimension $n \leq 4$ (or more generally $n + 1\leq k^*$, see Remark \ref{rem:curvature-est}).
\end{rema}

\begin{proof}
	We first prove the $C^\alpha_{loc} \cap W^{1,2}_{loc}$ and Hausdorff convergence for $u_\mu$ satisfying only properties (1)-(5).  By our hypothesis, we know that the functions $v_\mu$ are uniformly bounded in $W^{1,\infty}(B_{1})$, and so passing to a subsequence we have $v_\mu \to v$ in $C^\alpha_{loc}(B_{1}^n)$ for all $\alpha < 1$, for some $v \in W^{1,\infty}(B_{1})$ satisfying $|v| + \Lip(v) \leq c(n, \gamma)$.	
	
	We claim that $\partial \{ v_\mu > 0 \} \to \partial \{ v > 0 \}$ in the local Hausdorff distance in $B_{1/2}$, which will imply among other things that $1_{\{ v_\mu > 0 \}} \to 1_{\{ v > 0 \}}$ in $L^1_{loc}(B_{1/2})$ also.  If $x \in \partial \{ v > 0 \} \cap B_{1/2}$, then for any $\eps > 0$ small, choose $z \in B_\eps(x)$ with $0 < v(z) < \eps$.  For $\mu \gg 1$, we must have $0 < v_\mu(z) < 2 \eps$, and hence by \eqref{eqn:non-deg} we have $d(x, \partial \{ v_\mu > 0 \}) \leq c(n, \gamma) \eps$.  Thus we obtain a sequence $x_\mu \in \partial \{ v_\mu > 0 \}$ with $x_\mu \to x$.  Conversely, suppose we have $x_\mu \in \partial \{ v_\mu > 0 \}$ with $x_\mu \to x \in B_{1/2}$.  If $x \not\in \partial \{ v > 0 \}$, then there is a ball $B_{2r}(x)$ on which $v = 0$.  But then since $v_i \to 0$ on $B_r(x)$, Lemma \ref{lem:flat-is-zero} implies $v_\mu \equiv 0$ on $B_{r/2}(x)$ for $\mu \gg 1$, contradicing our choice of $x_\mu$.  So we must have $x \in \partial \{ v > 0 \}$.  This proves our claim.   
	
	For any fixed $B_{2r}(p) \subset \{ v > 0 \}$, then $B_{2r}(p) \subset \{ v_\mu > 0 \}$ for all $\mu \gg 1$.  Since each $u_\mu$ solves the minimal surface equation in $B_{2r}(p)$, by standard elliptic estimates $v_\mu \to v$ smoothly in $B_r(p)$.  So $v_\mu \to v$ smoothly on compact subsets of $B_{1/2} \cap \{ v > 0 \}$.  Together with the uniform Lipschitz bounds on $v_\mu$ and the local Hausdorff convergence $\partial \{ v_\mu > 0 \} \to \partial \{ v > 0 \}$, we get $v_\mu \to v$ in $W^{1,2}_{loc}(B_{1/2})$.
	
	We now prove that $v$ minimizes $J$ in $B_{1/2}$. For this, let $r< 1/2$ and let $w$ be a Lipschitz function satisfying $\spt (w - v) \subset B_r$.  Choose a smooth function $\eta : B_{1/2} \to [0, 1]$ with $\spt (w - v) \Subset \spt \eta \subset B_{r}$.  Define the functions $w_\mu = w + (1-\eta)(v_\mu - v)$, and the domains
	\[
	\Omega'_\mu = \{ x \in B_{1/2}^{n+1} : 0 < x_1 < \max ( 0, \tan\theta_\mu w_\mu ) \}.
	\]
	Then $\Omega_\mu \Delta \Omega_\mu' \Subset B_{1/2}^{n+1}$ for $\mu \gg 1$, and so
	\begin{align*}
	&\cA^{\theta_\mu}(\Omega_\mu) \leq \cA^{\theta_\mu}(\Omega_\mu') \\
	&\implies \int_{\{u_\mu > 0 \} \cap B_r} \sqrt{ 1 + |Du_\mu|^2} - \cos\theta_\mu 1_{\{u_\mu > 0 \}} dx \\
	&\quad\quad\leq \int_{\{ w_\mu > 0 \} \cap B_r} \sqrt{1 + (\tan\theta_\mu)^2|Dw_\mu|^2} - \cos\theta_\mu 1_{\{ w_\mu > 0\}} dx \\
	&\implies \int_{B_r} \frac{1}{2} |Du_\mu|^2 + (1-\cos\theta_\mu) 1_{\{ u_\mu > 0 \}} dx \\
	&\quad\quad \leq \int_{B_r} \frac{1}{2} (\tan\theta_\mu)^2 |Dw_\mu|^2 + (1-\cos\theta_\mu) 1_{ \{ w_\mu > 0 \}} dx + O(\theta_\mu^3) \\
	&\implies \int_{B_r} |Dv_\mu|^2 + 1_{\{ v_\mu > 0 \}} dx \\
	&\quad\quad \leq \int_{B_r} |D w_\mu|^2 + 1_{\{ w_\mu > 0\}} dx + O(\theta_\mu) \\
	&\quad\quad \leq \int_{B_r} |D w|^2 + 2 D w \cdot D( (1-\eta)(v_\mu - v)) + |D((1-\eta)(v_\mu - v)|^2 dx \\
	&\quad\quad\quad + \int_{B_r} 1_{\{ w > 0 \}} + 1_{\{ \eta < 1 \}} dx + O(\theta_\mu)
	\end{align*}
	In the second implication we used the bound $|Du_\mu| \leq c(n, \gamma) \tan \theta$.  Taking $\mu \to \infty$ gives
	\[
	\int_{B_r} |Dv|^2 + 1_{\{v > 0 \}} dx \leq \int_{B_r} |Dw|^2 + 1_{ \{ w > 0 \}} + 1_{\{ \eta < 1 \}} dx,
	\]
	and then letting $\eta$ approximate $1_{B_r}$ we conclude that
	\[J_{B_{1/2}}(v)\le J_{B_{1/2}}(w).\]
	Finally, for a general $w - v \in W^{1,2}_0(B_{r})$ we can approximate $w - v$ in $W^{1,2}(B_{1/2})$ by a Lipschitz function $\tilde w$ compactly supported in $B_{1/2}$, and apply the above reasoning to $v + \tilde w$.

	\vspace{3mm}

	We now assume the bound \eqref{eqn:blow-up-hypA}, and prove $C^{2,\alpha}_{loc}$ convergence $v_\mu \to v$ near the free-boundary, and convergence as in \eqref{eqn:blow-up-conclA}.  We have
	\begin{equation}\label{eqn:blow-up-19}
	|D^2 v_\mu(z)| = (1+O(\theta_\mu)) \theta_\mu^{-1} |A_{M_\mu}(z + u_\mu(z))|,
	\end{equation}
	and so \eqref{eqn:blow-up-hypA} implies
	\begin{equation}\label{eqn:blow-up-20}
	\sup_{B_1 \cap \{ v_\mu > 0 \} } |D^2 v_\mu| \leq \Lambda < \infty
	\end{equation}
	for some fixed constant $\Lambda$.  Since $v_\mu \to v$ smoothly on compact subsets of $\del \{ v > 0 \}$, we deduce that
	\[
	\sup_{B_1 \cap \{ v > 0 \} } |D^2 v| \leq \Lambda
	\]
	also.  It follows that any tangent solution at any $x \in \del \{ v > 0 \}$ is linear, and hence (by e.g. \cite{AltCaffarelli}) $\del \{v > 0 \}$ is entirely regular.  It also follows that if $x_\mu \to x \in \{ v > 0 \} \cap B_1$, then $\theta_\mu^{-1} |A_{M_\mu}(x_\mu + u_\mu(x_\mu))| \to |Dv(x)|$.
	
	Fix $x \in \del \{ v > 0 \} \cap B_{1/2}$.  We shall show that in some small ball $B_r(x)$ the Hodograph transforms of $v_\mu$ convergence in $C^{2,\alpha}$ to Hodograph transform of $v$.  Given any $\eps > 0$ (to be fixed later to depend only on $n$), we claim there is a radius $r > 0$ so that
	\begin{align}
	r^{-1} |v_\mu(y) - (y - x) \cdot Dv(x)| + |Dv_\mu(y) - Dv(x)| \leq \eps \quad \text{in } B_r(x) \cap \{ v_i > 0 \}, \label{eqn:blow-up-21} \\
	r^{-1} |v(y) - (y - x) \cdot Dv(x)| + r^{-1}|Dv(y) - Dv(x)| \leq \eps \quad \text{in } B_r(x) \cap \{ v > 0 \}. \label{eqn:blow-up-22}
	\end{align}
	
	The second estimate \eqref{eqn:blow-up-22} follows from the regularity of $\del \{ v > 0 \}$, with $r = r(v, x, \eps)$.  The Hausdorff convergence of free boundaries implies there is a sequence $x_\mu \in \del \{ v_\mu > 0 \} \to x$, and from the bound \eqref{eqn:blow-up-20} (and the boundary condition $\del_n v_\mu = -1$ along $\del \{ v_\mu > 0 \}$), we can shrink $r = r(n, \Lambda)$ as necessary to get
	\begin{equation}\label{eqn:blow-up-23}
	r^{-1} |v_\mu(y) - (y - x_i) \cdot Dv_\mu(x_i)| + |Dv_\mu(y) - Dv_\mu(x_i)| \leq \eps \quad \text{in } U_\mu,
	\end{equation}
	where $U_\mu$ is the connected component of $\{ v_\mu > 0 \} \cap B_r(x)$ containing $x_\mu \in \del U_\mu$.  Combining \eqref{eqn:blow-up-22}, \eqref{eqn:blow-up-23}, with the $C^0_{loc}$ convergence $v_i \to v$ we deduce $|Dv_i(x_i) - Dv(x)| \leq c(n)\eps$ and $\{ v_\mu > 0 \} \cap B_r(x) = U_\mu$.  The first estimate \eqref{eqn:blow-up-21} (with $c(n) \eps$ in place of $\eps$) then follows.  This proves our claim.
	
	For convenience, after rotating/translating/dilating, there will be no loss in taking $x = 0$, $r = 1$, and $Dv(x \equiv 0) = e_{n+1}$, so that
	\begin{equation}\label{eqn:blow-up-24}
	|Dv_\mu(y) - e_{n+1}| + |v_\mu(y) - (y_{n+1})_+| \leq \eps \quad \text{in } \{ v_\mu > 0 \} \cap B_1.
	\end{equation}
	Ensuring $\eps(n)$ is sufficiently small, the inverse function theorem implies we can find smooth functions $g_\mu(y_1, \cdots, y_{n}) : \{ y_1 > 0 \} \cap B_1 \to \R$ satisfying
	\begin{align}
	&\{ (v_\mu(y_2, \cdots, y_{n+1}), y_2, \cdots, y_{n+1}) : y \in \{ v_i > 0 \} \cap B_1 \} \cap B_{9/10} \nonumber \\
	&= \{ (y_1, \cdots, y_n, g_\mu(y_1, \cdots, y_n)) : y \in \{ y_1 > 0 \} \cap B_1 \} \cap B_{9/10}, \label{eqn:blow-up-26}
	\end{align}
	and similarly we can find a $g$ satisfying \eqref{eqn:blow-up-26} with $g$, $v$ in place of $g_\mu$, $v_\mu$.  From \eqref{eqn:blow-up-24}, we have
	\begin{equation}\label{eqn:blow-up-25}
	|g_\mu(y_1, \cdots, y_n) - y_1| + |Dg_\mu - e_1| \leq c(n)\eps \quad \text{in } \{ y_1 > 0 \} \cap B_{9/10}
	\end{equation}
	and similarly for $g$.  The $C^0_{loc}$ convergence $v_\mu \to v$ implies that $g_\mu \to g$ in $C^0(B_{9/10}^+)$.
	
	By differentiating \eqref{eqn:blow-up-26}, we have for every $2 \leq i, j \leq n$ the relations
	\begin{align}
	&1 = D_{n+1} v D_1 g, \quad 0 = D_i v + D_{n+1} v D_i g \\
	&0 = D_{n+1, n+1}^2 v D_1 g D_1 g + D_{n+1} v D^2_{11} g \\
	&0 = D^2_{n+1, i} v D_1 g + D^2_{n+1, n+1} v D_1 g D_i g + D_{n+1} v D^2_{1, i} g \\
	&0 = D^2_{ij} v + D^2_{i, n+1} v D_j g + D^2_{j, n+1} D_i g + D^2_{n+1, n+1} v D_i g D_j g + D_{n+1} v D^2_{ij} g,
	\end{align}
	and the same with $g_\mu, v_\mu$ in place of $g, v$.  From the above, the bound \eqref{eqn:blow-up-20}, and the estimates \eqref{eqn:blow-up-24}, we get the uniform bound
	\begin{equation}\label{eqn:blow-up-27}
	\sup_{B_{9/10}^+} |D^2 g_\mu| \leq c(n) \Lambda,
	\end{equation}
	and since $D v(0) = e_{n+1}$, we have
	\begin{equation}\label{eqn:blow-up-28}
	Dg(0) = e_1, \quad |D^2 g(0)| = |D^2 v(0)|.
	\end{equation}
	
	We show that the $g_\mu$ satisfy a good elliptic PDE with Neumann-type boundary conditions, which we use to establish a priori $C^{2,\alpha}$ bounds.  For a general smooth function $g(y_1, \cdots, y_n) : B_1^+ \to \R$, define
	\[
	\graph^\theta(g) = \{ (\tan\theta y_1, y_2, \cdots, y_n, g(y_1, \ldots, y_n)) : y_1 > 0 \},
	\]
	so that we have $M_\mu \cap B_{8/10} = \graph^{\theta_\mu}(g_\mu) \cap B_{8/10}$.  Denote by
	\[
	D^\theta g = \left( \frac{ D_1 g}{\tan \theta}, D_2 g, \cdots, D_n g \right).
	\]
	Then the upward unit normal $\nu$ and volume form $d Vol$ for $\graph^\theta(g)$ have the expressions
	\begin{align}
	&\nu(g) = \frac{1}{(1+|D^\theta g|^2)^{1/2}}\left(-\frac{D_1g}{\tan\theta}, -D_2g ,\cdots, -D_n g,1\right),\\
	&d\Vol = (1+|D^\theta g|)^{1/2} dx.
	\end{align}
	Note that by \eqref{eqn:blow-up-25} we have
	\begin{equation}
	|\tan \theta_\mu (1+|D^{\theta_\mu} g_\mu|^2)^{1/2} - 1|_{C^0(B_{9/10}^+)} \leq c(n) \eps, \quad \text{for } \mu \gg 1.
	\end{equation}
	
	Thus, in $B_{8/10}^+$ each $g_\mu$ is stationary for the area functional
	\[
	A(\graph^{\theta_\mu}(g_\mu)) = \int (1 + |D^{\theta_\mu} g|)^{1/2}
	\]
	with boundary condition $\nu(g_\mu) \cdot (-e_1) = \cos\theta_\mu$ when $y_1 = 0$.  A standard computation shows that $g_\mu$ is a solution to the equation
	\begin{equation}\label{eq.for.g}
	\begin{cases}
	\mathrm{div}\left(\frac{1}{\tan\theta_\mu(1+|D^{\theta_\mu} g_\mu|^2)^{1/2}}\left(\frac{D_1 g_\mu}{\tan^2\theta_\mu},D_2 g_\mu,\cdots,D_n g_\mu\right)\right)=0 \quad &\text { in }\{y_1 \geq 0\},\\
	(D_1 g_\mu )^2= (|Dg_\mu|^2+1) / 2\quad &\text { on }\{y_1=0\}.
	\end{cases}
	\end{equation}
	
	Note that \eqref{eq.for.g} is a divergence form elliptic equation, but whose coefficients degenerate as $\theta_\mu\to 0$. Denote these coefficients by $(a_{ij}(Dg_\mu))_{1\le i,j \le n}$, $a_{ij}: \RR^n \to \RR$, so the equation for $g_\mu$ becomes $D_i(a_{ij}(Dg_\mu)D_j g_\mu)=0$, for
	\begin{equation}\label{eq.aij}
	\begin{split}
	&a_{11}(p)= \frac{1}{\tan^3\theta_\mu (1+\frac{p_1^2}{\tan^2\theta_\mu}+p_2^2+\cdots+p_n^2)^{1/2}},\\
	&a_{ii}(p) = \frac{1}{\tan\theta_\mu (1+\frac{p_1^2}{\tan^2\theta_\mu}+p_2^2+\cdots+p_n^2)^{1/2}}, \quad 2\le i\le n,
	\end{split}
	\end{equation}
	and $a_{ij}=0$ for all other indices.
	
	Fix $1\le k\le n$ an integer, and set $w_\mu =D_k g_\mu$. Differentiating \eqref{eq.for.g} in $y_k$, we obtain that $w_\mu$ is a solution to the equation
	\begin{equation}\label{eq.for.w}
	D_i(\bar a_{ij} D_j w_\mu) = 0 \text { in }y_1\geq 0
	\end{equation}
	and when $k\ge 2$, we have the boundary condition
	\begin{equation}\label{eq.boundary.w}
	\mathbf{b}\cdot Dw_\mu = 0 \text { on }\{y_1=0\}.
	\end{equation}
	Here
	\[\bar a_{ij } = a_{ij}(Dg_\mu)+\sum_p (D_j a_{ip})(Dg_\mu) D_p g_\mu , \quad \mathbf {b}=D_1 g_\mu e_1 - \sum_{j=2}^n D_j g_\mu e_j.\]
	The vector $\mathbf{b}$ is uniformly close to $c(n)\eps$ as $\theta_\mu \to 0$, and we can compute the coefficients $\bar a_{ij}$ as
	\begin{align}
	&\bar a_{11}=  \frac{1+ (D_2 g_\mu)^2 + \cdots + (D_n g_\mu)^2}{\tan^3\theta_\mu \left(1+|D^{\theta_\mu} g_\mu|^2\right)^{3/2}} ,\label{eq.tilde.a11}\\
	&\bar a_{1j}=\bar a_{j1} = -\frac{D_1 g_\mu D_j g_\mu}{\tan^3\theta_\mu(1+|D^{\theta_\mu} g_\mu|^2)^{3/2}}, \quad j\ge 2\label{eq.tilde.a1j}\\
	&\bar a_{jj} = \frac{1}{\tan\theta_\mu (1+|D^{\theta_\mu} g_\mu|^2)^{1/2}} - \frac{(D_j g_\mu)^2}{\tan\theta_\mu (1+|D^{\theta_\mu} g_\mu|^2)^{3/2}} , \quad j\ge 2\\
	&\bar a_{ij} = -\frac{D_i g_\mu D_j g_\mu}{\tan\theta_\mu (1+|D^{\theta_\mu} g_\mu|^2)^{3/2}}, \quad i,j\ge 2, i\ne j.
	\end{align}
	Using \eqref{eqn:blow-up-25}, \eqref{eqn:blow-up-27}, we conclude that
	\begin{equation}\label{eqn:blow-up-30}
	|\bar a_{ij} - \delta_{ij}|_{C^0(B_{9/10}^+)} \leq c(n)\eps, \quad \text{and} \quad |\bar a_{ij}|_{C^\alpha(B_{9/10}^+)} \leq c(n)\Lambda,
	\end{equation}
	provided $\mu \gg 1$.  Therefore (ensuring $\eps(n)$ is small) \eqref{eq.for.w} is a divergence form uniformly elliptic equation with uniformly Holder coefficients.
	
	A key observation is that \eqref{eq.boundary.w} is the natural boundary condition for \eqref{eq.for.w}, in the sense that for every $\varphi\in C_0^1(B_{9/10} \cap \RR^n=B_{9/10} \cap \{y_{n+1}=0\})$, we have
	\begin{equation}\label{eq.weak.form.for.w}
	\int_{\{y_1\ge 0, y_{n+1}=0\}} \bar a_{ij} D_j w_\mu D_i \varphi = 0.
	\end{equation}
	Indeed, we have that
	\begin{align*}
	0&=-\int_{\{y_1\ge 0,y_{n+1}=0\}} \varphi D_i (\bar a_{ij}D_j w_\mu) \\
	&= \int_{\{y_1\ge 0, y_{n+1}=0 \}} \bar a_{ij}D_j w D_i \varphi  - \int_{\{y_1=0,y_{n+1=0}\}} \varphi \sum_{j=1}^n \bar a_{1j}D_j w_\mu,
	\end{align*}
	and \eqref{eq.for.g} and \eqref{eq.tilde.a11} implies that
	\[\bar a_{11} = \frac{1}{\tan^3\theta (1+|D^\theta g|^2)^{\frac 32}} (D_1 g)^2.\]
	Combined with \eqref{eq.tilde.a1j}, \eqref{eq.boundary.w} implies that $\sum_{j=1}^n \bar a_{1j}D_j w = 0$ on $\{x_1=x_{n+1}=0\}$.
	
	Thus, by \eqref{eqn:blow-up-30}, \eqref{eq.weak.form.for.w} we can apply $C^{1,\alpha}$ Schauder theory to get the bound
	\begin{equation}\label{eqn:blow-up-31}
	[D^2_{ij} g_\mu]_{\alpha, B_{8/10}^+} \leq c(n) \Lambda, \quad (i, j) \neq (1, 1).
	\end{equation}
	On the other hand, by rewriting \eqref{eq.for.w} in non-divergence form, we get that $g_\mu$ satisfies the equation $\bar a_{ij} D^2_{ij} g_\mu = 0$ on $\{ y_1 \geq 0 \}$, and so we can write
	\begin{equation}\label{eqn:blow-up-32}
	D_{11}^2 g_\mu = -\bar a_{11}^{-1} \sum_{i + j > 2} \bar a_{ij} D^2_{ij} g_\mu,
	\end{equation}
	and then combine \eqref{eqn:blow-up-27}, \eqref{eqn:blow-up-31}, \eqref{eqn:blow-up-32} to deduce
	\begin{equation}\label{eqn:blow-up-33}
	|g_\mu|_{C^{2,\alpha}(B_{8/10}^+)} \leq c(n)\Lambda.
	\end{equation}
	
	By Azela-Ascoli and the $C^0$ convergence $g_\mu \to g$, we deduce that $g_\mu \to g$ in $C^{2,\alpha}(B_{8/10}^+)$.  In particular, if $x_\mu \in \overline{\{ v_\mu > 0 \}} \to 0 \in \del \{ v > 0 \}$, then from \eqref{eqn:blow-up-19}, \eqref{eqn:blow-up-33}, \eqref{eqn:blow-up-28} we get
	\begin{align*}
	\theta_\mu^{-1} |A_{M_\mu}(x_\mu + u_\mu(x_\mu))| &= (1+o(1)) |D^2 v_\mu(x_\mu)| \\
	&= |D^2 g_\mu(0)| + o(1) \\
	&\to |D^2 g(0)| = |D^2 v(0)|.
	\end{align*}
	This proves \eqref{eqn:blow-up-conclA}.
\end{proof}

\subsection{A priori estimates} \label{sec:a priori}

For smooth minimizers, we can ``bootstrap'' the convergence of Proposition \ref{prop:blow-up} to show that the non-degeneracy estimates \eqref{eqn:graph-prop-hyp} hold for a uniform choice of $\gamma$.

\begin{lemm}[Bootstrap non-degeneracy near free-boundary]\label{lem:bootstrap}
	There are constants $\theta_0(n)$ and $d_0(n)$ so that the following holds.  Let $\Omega$ be a smooth minimizer of $\cA^\theta$ in $B_1 \subset \R^{n+1}$ with $\theta \in (0, \theta_0)$.  Then writing $M = \del\Omega \cap \R^{n+1}_+$ we have
	\begin{equation}\label{eqn:bootstrap-concl1}
	\frac{1}{2c_0} \tan\theta d(\pi(x), \del M) < x_1 < 2 \tan\theta d(\pi(x), \del M)
	\end{equation}
	for all $x \in M \cap B_{1/2} \cap B_{d_0}(\del M)$, where $c_0$ is as in Lemma \ref{lemm.C0.bound.on.AC.minimizer}.
\end{lemm}

\begin{proof}
	Given a smooth minimizer $\Omega$ of $\cA^\theta$ in some ball $B_R$, and taking $M = \del \Omega \cap \R^{n+1}_+$ and $r < R$, define $D(M, B_r)$ to be the largest number $D$ so that \eqref{eqn:bootstrap-concl1} holds with $B_{D}(\del M)$ in place of $B_{d_0}(\del M)$ and $B_r$ in place of $B_{1/2}$.  Note that $D(M, B_r)$ is always positive (by our assumptions that $M$ is smooth and $r < R$), and that $r \mapsto D(M, B_r)$ is decreasing.  Note also the scaling $D(\lambda M, B_{\lambda r}) = \lambda D(M, B_r)$.
	
	After replacing $B_1$ with $B_{3/4}$, there will be no loss of generality in assuming $D(M, B_1) > 0$.  We shall prove: there are $C(n)$, $\theta_0(n)$ positive so that provided $\theta \leq \theta_0$, then we have the bound
	\begin{equation}\label{eqn:bootstrap-1}
	(1-r) D(M, B_r)^{-1} \leq C \quad \forall r \in (0, 1),
	\end{equation}
	which clearly will imply the Lemma.
	
	Suppose, towards a contradiction, there is no $C(n)$, $\theta_0(n)$ which makes \eqref{eqn:bootstrap-1} true.  Then there are sequences $\theta_\mu \to 0$, and minimizers $\Omega_i$ of $\cA^{\theta_i}$ for which (writing $M_i = \del\Omega_i \cap \R^n$)
	\[
	\sup_{r \in (0, 1)} \,\, (1 - r) D(M_i, B_r)^{-1} \to \infty .
	\]
	Choose $r_i \in (0, 1)$ so that
	\[
	(1 - r_i) D(M_i, B_{r_i})^{-1} \geq \frac{1}{2} \sup_{r \in (0, 1)} \,\, (1 - r) D(M_i, B_r)^{-1}.
	\]
	Choose $z_i \in \overline{B_{r_i}}$ with $d(z_i, \del M_i) = D(M_i, B_{r_i})$ so that
	\begin{align*}
	\text{either}\quad  \frac{1}{2c_0} \tan\theta_i d(\pi(z_i), \del M_i) = z_{i,1}, \quad \text{or}\quad z_{i, 1} = 2 \tan\theta_i d(\pi(z_i), \del M_i)
	\end{align*}
	($z_{i,1}$ being the first component of $z_i$).  Set $d_i = d(\pi(z_i), \del M) \leq D(M_i, B_{r_i})$.  Clearly we have $d_i \to 0$ and
	\[
	d(\pi(z_i), \del B_1) d_i^{-1} \geq (1 - r_i) d_i^{-1} \geq (1-r_i)D(M_i, B_{r_i})^{-1} \to \infty.
	\]
	
	Define the dilated/translated domains $\Omega_i' = (\Omega_i - \pi(z_i)) / d_i$, surfaces $M_i' = (M_i - \pi(z_i))/d_i$, and points $z_i' = (z_i - \pi(z_i))/r_i$.  Given any fixed $R > 0$, then for $i \gg 1$ the $\Omega_i'$ will be minimizers of $\cA^{\theta_i}$ in $B_R(0) \equiv B_R(\pi(z_i'))$ with
	\begin{equation} \label{eqn:bootstrap-2}
	D(M_i', B_R) \geq 1/4,
	\end{equation}
	but for which
	\begin{equation}\label{eqn:bootstrap-3}
	\text{either} \quad \frac{1}{2c_0} \tan\theta_i d(0, \del M_i) = z_{i, 1}' \quad \text{or} \quad z_{i, 1}' = 2 \tan\theta_i d(0, \del M_i').
	\end{equation}
	
	From \eqref{eqn:bootstrap-2}, Lemma \ref{lem:graph-prop}, Proposition \ref{prop:blow-up}, for any $R > 0$ there are $i_0(R)$ and $\eps(R)$ and Lipschitz functions $u_i : B_R^n \to \R$ so that $M_i' \cap B_R = \graph_{\R^n}(u_i)$ in $\{ 0 < x_1 < \eps(R)\}$ whenever $i > i_0(R)$.  Taking a diagonal subsequence, the functions $u_i/\tan\theta_i$ converge in $(C^\alpha_{loc} \cap W^{1,2}_{loc})(\R^n)$ to an entire Lipschitz minimizer $v : \R^n \to \R$ of the Alt-Caffarelli functional $J$.  Lemma \ref{lemm.C0.bound.on.AC.minimizer} and the local Hausdorff convergence $\del \{ u_i > 0 \} \to \del \{ v > 0 \}$ imply that
	\[
	\frac{1}{2c_0} \tan\theta_i d(z, \del \{ u_i > 0 \}) < u_i(z) < 2 \tan\theta_i d(z, \del \{ u_i > 0 \})
	\]
	for all $z \in B_1^n$ and all $i \gg 1$, which contradicts \eqref{eqn:bootstrap-3} since $z_{i, 1}' = u_i(0)$ and $\del \{ u_i > 0 \} = \del M_i'$.
\end{proof}

Most importantly for us, in low dimensions (whereever $1$-homogenous minimizers of the Alt-Caffarelli energy are linear) we can obtain a priori curvature estimates near the boundary also.
\begin{lemm}[Curvature estimates]\label{lem:curvature-est}
	There are absolute constants $C$, $\eps$, $\theta_0$ so that the following holds.  Take $n \leq 4$, and let $\Omega$ be a smooth minimizer of $\cA^\theta$ in $B_1 \subset \R^{n+1}$, for $\theta \in (0, \theta_0)$, and write $M = \del \Omega \cap \R^{n+1}_+$.  Then, assuming $0 \in \del M$, we have the curvature bound $\sup_{M \cap B_\eps(\R^n) \cap B_{1/8}}  \theta^{-1}|A_M| \leq C$.
\end{lemm}

\begin{proof}
	After replacing $B_1$ with $B_{1/4}$ there will be no loss in assuming that $\sup_{B_1} |A_M| < \infty$.  Moreover, ensuring $\theta_0$ is sufficiently small, by Lemmas \ref{lem:bootstrap}, \ref{lem:graph-prop} there is also no loss in assuming we have a Lipschitz function $u : B_1^n \to \R$ and absolute constant $\eps > 0$ so that $M = \graph_{\R^n}(u)$ in $B_1 \cap \{ 0 < x_1 < \eps\}$ and $\Lip(u) \leq c \theta$ and
	\begin{equation}\label{eqn:curvature-est-1}
	\frac{1}{c} \theta d(z, \del \{ u > 0 \}) \leq u(z) \leq 2 c \theta d(z, \del \{ u > 0 \})
	\end{equation}
	for all $z \in B_1^n \cap \{ u > 0 \}$.  Here $c$ is an absolute constant.
	
	We will show that $\theta^{-1} |A_M(x)| (1-|x|)$ is bounded by an absolute constant, provided $\theta$ is sufficiently small, which will clearly prove the Lemma.  Suppose otherwise: there are sequences $\theta_i \to 0$, $\Omega_i$ minimizing $\cA^{\theta_i}$ in $B_1$, so that if $M_i = \del\Omega_i \cap \R^{n+1}_+$ then 
	$$\sup_{x \in B_1} \theta_i^{-1} |A_{M_i}(x)| (1 - |x|) \to \infty$$
	as $i \to \infty$.  Choose $x_i \in B_1$ satisfying 
	\[
	\theta_i^{-1} |A_{M_i}(x_i)| (1 - |x_i|) \geq \frac{1}{2} \sup_{x \in B_1} \theta_i^{-1} |A_{M_i}(x)| (1 - |x|).
	\]
	Let $\lambda_i = \theta_i^{-1} |A_{M_i}(x_i)|$.  Write $u_i$ for the graphing functions for $M_i$ as in the first paragraph. We break into two cases. 
	
	\textbf{Case 1:} $\sup_i \lambda_i^{-1} d(x_i, \del M_i) < \infty$.  Define the rescaled domains $\Omega_i' = \lambda_i(\Omega_i - \pi(x_i))$, surfaces $M_i' = \lambda_i (M_i - \pi(x_i)) \equiv \del \Omega_i' \cap \del \R^{n+1}_+$, and points $x_i' = \lambda_i(x_i - \pi(x_i))$.  Then for a suitable $R_i \to \infty$, the $\Omega_i'$ are minimizers of $\cA^{\theta_i}$ in $B_{R_i}(0) \equiv B_{R_i}(\pi(x_i'))$, satisfying
	\begin{equation}\label{eqn:curvature-est-2}
	\sup_i d(0, \del M_i') < \infty, \quad \theta_i^{-1} |A_{M_i'}(x_i')| = 1, \quad \sup_{B_{R_i}} \theta_i^{-1} |A_{M_i'}| \leq 2.
	\end{equation}
	Moreover, if we let $u_i'(z) = \lambda_i u_i( (z - \pi(x_i))/\lambda_i)$, then $M_i' \cap B_{R_i} = \graph_{\R^n}(u_i')$, $\Lip(u_i') \leq c(\rho) \theta_i$, and the $u_i'$ continue to satisfy \eqref{eqn:curvature-est-1} on $B_{R_i}^n \cap \{ u_i' > 0 \}$.
	
	We can therefore apply Proposition \ref{prop:blow-up} (after passing to a subsequence as necessary) to deduce there is entire Lipschitz minimizer $v : \R^n \to \R$ of the Alt-Caffarelli functional $J$ so that $\theta_i^{-1} u_i' \to v$ in $(C^\alpha_{loc} \cap W^{1, 2}_{loc})(\R^n)$.  By Lemma \ref{theo.jerison.savin} and our restriction $n \leq 4$, we must have $v(z) = ((z - z_0) \cdot n)_+$ for some $z_0 \in \R^n$ and unit vector $n$, and in particular we have $D^2 v \equiv 0$.  On the other hand, \eqref{eqn:curvature-est-2} and Proposition \ref{prop:blow-up} imply we have
	\[
	\sup_{B_1} \theta_i^{-1} |A_{M_i'}| \to \sup_{B_1^n \cap \{ v > 0 \}} |D^2 v| \equiv 0,
	\]
	which contradicts our choice of $\theta_i^{-1} |A_{M_i'}(0)| = 1$.

	\textbf{Case 2:} $\sup_i \lambda_i^{-1} d(x_i, \del M_i) = \infty$.  After passing to a subsequence we can assume $$\lim_i \lambda_i^{-1} d(x_i, \del M_i) = \infty\,.$$  Define the functions
	\[
	u_i'(z) = \lambda_i (u_i((z - \pi(x_i))/\lambda_i) - x_{i, 1}), 
	\]
	so that $M_i' := \lambda_i (M_i - x_i)$ are the graphs of $u_i'$.  Then for a sequence $R_i \to \infty$, the $u_i'$ are smooth solutions to the minimal surface equation in $B_{R_i}^n$ satisfying
	\[
	\Lip(u_i') \leq c \theta_i, \quad u_i'(0) = 0, \quad \theta_i^{-1} |D^2 u_i'(0)| = 1 + O(\theta_i).
	\]
	
	By standard interior estimates and the structure of the minimal surface equation, after passing to a further subsequence, we have $C^2_{loc}(\R^n)$ convergence of $\theta_i^{-1} u_i' \to v$ for some harmonic function $v : \R^n \to \R$ satisfying
	\[
	\Lip(v) \leq c, \quad v(0) = 0, \quad |D^2 v(0)| = 1.
	\]
	But now the Liouville theorem for harmonic functions together with the first condition implies $v$ is linear, which is a contradiction with the third condition.
\end{proof}

\subsection{Proof of Theorems \ref{thm:graphical.convergence.to.AC.minimizer},  \ref{thm:AC.bernstein.implies.capillary.bernstein}, \ref{thm:general-blow-up}}

Theorems \ref{thm:graphical.convergence.to.AC.minimizer}, \ref{thm:general-blow-up} follow by directly combining Lemma \ref{lem:graph-prop-cones}, Proposition \ref{prop:blow-up}, with Lemmas \ref{lem:bootstrap} and \ref{lem:curvature-est}.  The basic strategy for Theorem \ref{thm:AC.bernstein.implies.capillary.bernstein} involves three key ingredients: we can write $M_i = \graph(u_i)$ and blow-up $\theta_i^{-1} u_i \to v$ for some $1$-homogenous $v$ minimizing the Alt-Caffarelli energy (Theorem \ref{thm:graphical.convergence.to.AC.minimizer}); in low dimensions $v$ must be linear, and so the improved convergence of Proposition \ref{prop:blow-up} implies $\theta_i^{-1} |A_{M_i \cap \del B_1}| \to 0$; a rigidity-type theorem for minimal capillary surfaces $\Sigma$ in the sphere with small angle $\theta$ and $\theta^{-1}|A_\Sigma|$ small. We first state the rigidity theorem.
\begin{lemm}[Rigidity of almost-planar capillary cones with small angle]\label{lem:rigidity}
	There is a $\delta(n)$ so that the following holds.  Let $M$ be a stationary capillary cone in $\R^{n+1}_+$ making contact angle $\theta > 0$ with $\R^n$, and with smooth link $\Sigma$.  Suppose the outward conormal $\eta_\Sigma$ and second fundamental form $A_\Sigma$ of $\Sigma$ satisfy
	\[
	|\eta_\Sigma + e_{n+1}| \leq \delta, \quad \theta^{-1} |A_\Sigma| \leq \delta.
	\]
	Then $M$ is a half-plane.
\end{lemm}

(Note, though we don't explicitly state it as a hypothesis, $\eta_\Sigma$ can only be close to $-e_{n+1} \in \R^n \equiv \del \R^{n+1}_+$ provided $\theta$ is very small.)

\begin{proof}
	We first note the following trace-type inequality: provided $\delta$ is small, we have for any non-negative $f \in C^1(\Sigma)$:
	\begin{align}
	\int_{\del \Sigma} f &\leq 2 \int_{\del\Sigma} f (- \eta \cdot e_{n+1}) = 2 \int_\Sigma \Div_\Sigma(-f e_{n+1}) \nonumber \\
	&= 2\int_\Sigma \nabla_\Sigma f \cdot (-e_{n+1}) + f \Div_\Sigma(-e_{n+1}) 
	\leq 2 \int_\Sigma |\nabla_\Sigma f| + (n-1) f, \label{eqn:rigidity-1}
	\end{align}
	having used that $\Div_\Sigma(e_{n+1}) = n-1$.
	
	We now take Simons' equation on $\Sigma$ (see \cite[Theorem 5.3.1]{Simons}),
	\[
	\Delta_\Sigma A_\Sigma = (n-1 - |A_\Sigma|^2)A_\Sigma ,
	\]
	and inner product with $A_\Sigma$ and integrate over $\Sigma$, to obtain
	\begin{equation}\label{eqn:rigidity-2}
	\int_\Sigma |\nabla_\Sigma A_\Sigma|^2 + (n-1-|A_\Sigma|^2)|A_\Sigma|^2 = \int_{\del\Sigma} \frac{1}{2} \frac{\del |A_\Sigma|^2}{\del \eta}.
	\end{equation}
	
	By \cite[Lemma C.2]{LiZhouZhu}, we have
	\begin{equation}\label{eqn:rigidity-3}
	\left| \frac{\del |A_\Sigma|^2}{\del \eta} \right| \leq c(n) \cot\theta |A_\Sigma|^3.
	\end{equation}
	Therefore, combining \eqref{eqn:rigidity-3} with \eqref{eqn:rigidity-2}, and using \eqref{eqn:rigidity-1} with our assumption $\theta^{-1} |A_\Sigma| \leq \delta$, we get
	\begin{align*}
	\int_\Sigma |\nabla_\Sigma A_\Sigma|^2 + (n-1)|A_\Sigma|^2 
	&\leq c(n) \delta \int_{\del\Sigma} |A_\Sigma|^2 + \int_\Sigma |A_\Sigma|^4\\
	&\leq c(n) \delta \int_\Sigma 2 |A_\Sigma| |\nabla_\Sigma |A_\Sigma|| + (n-1) |A_\Sigma|^2 + \delta \theta \int_\Sigma |A_\Sigma|^2 \\
	&\leq c(n) \delta \int_\Sigma |\nabla_\Sigma A_\Sigma|^2 + |A_\Sigma|^2 ,
	\end{align*}
	having used Kato's inequality $|\nabla_\Sigma |A_\Sigma|| \leq |\nabla_\Sigma A_\Sigma|$ in the last line.  Ensuring $\delta(n)$ is small, we deduce $A_\Sigma \equiv 0$, as desired.
\end{proof}

\begin{proof}[Proof of Theorem \ref{thm:AC.bernstein.implies.capillary.bernstein}]
	Suppose, towards a contradiction, there are sequences $\theta_i \to 0$ and cones $\Omega_i$ minimizing $\cA^{\theta_i}$, so that each $M_i := \del\Omega_i \cap \R^{n+1}_+$ is smooth away from $0$ but non-planar.  For $i \gg 1$, we can apply Lemma \ref{lem:bootstrap} at points in $\del M_i \cap \del B_1$ to get the non-degeneracy condition \eqref{eqn:graph-prop-cones-hyp} with some absolute constant $\gamma$.  We can then use Lemma \ref{lem:graph-prop-cones} and Proposition \ref{prop:blow-up} to deduce $M_i = \graph_{\R^n}(u_i)$, where $u_i/\theta_i \to v$ for some $1$-homogenous minimizer $v$.
	
	By our dimensionality hypothesis $v$ must be linear, and without loss of generality $v(x) = (e_{n+1} \cdot x)_+$.  By Lemma \ref{lem:curvature-est} and Proposition \ref{prop:blow-up}, we have $\sup_{\del B_1} \theta_i^{-1} |A_{M_i}| \to \sup_{\del B_1} |D^2 v| = 0$, and that $\theta_i^{-1} u_i \to (e_{n+1} \cdot x)_+$ in $C^{2,\alpha}_{loc}(B_2 \setminus B_{1/2})$ at the level of Hodograph transform, which implies that the conormals $\eta_{M_i} \to -e_{n+1}$ uniformly in $\del B_1$.  Lemma \ref{lem:rigidity} then tells us that the $M_i$ are planar for $i \gg 1$, which is a contradiction.
\end{proof}

\section{The case $4\le n \le 6$ with $\theta$ close to $\pi/2$}

In this section we prove case (3) of Theorem \ref{thm:main.cone}. In fact, we prove the following slightly stronger result.

\begin{theo}\label{thm:theta.close.pi/2}
	Suppose $4\le n\le 6$. There exists $\theta_1=\theta_1(n)$ such that if $\Omega\subset \RR^{n+1}$ is a stable minimal cone for $\cA^\theta$, $M= \partial \Omega\cap \RR^{n+1}_+$ is smooth away from $0$, and $\theta\in (\frac{\pi}{2}-\theta_1, \frac{\pi}{2}]$, then $M$ is flat.
\end{theo}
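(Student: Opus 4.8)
\medskip
\noindent\emph{Proof plan.} The strategy is a Simons-type stability argument on the spherical link, parallel to the stable Bernstein theorem of \cite[Appendix C]{LiZhouZhu}, in which the capillary boundary term is treated as a small perturbation because $\cot\theta$ is close to $0$. First I would pass to the link $\Sigma = M\cap S^n(1)$: by the isolated-singularity hypothesis this is a smooth compact two-sided minimal hypersurface of $S^n_+(1)$ of dimension $n-1$, meeting the totally geodesic $\partial S^n_+(1)$ at the constant angle $\theta$; since $M$ is a cone one has $|A_M|=|A_\Sigma|$ on $\Sigma$, and $M$ is flat if and only if $A_\Sigma\equiv 0$. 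Stability of the cone is equivalent to the spectral inequality of Proposition \ref{prop.stable.spectral},
\[
\int_\Sigma\big(|\nabla f|^2-|A_\Sigma|^2f^2\big)-\cot\theta\int_{\partial\Sigma}A_\Sigma(\eta,\eta)f^2\ \ge\ -\Big(\tfrac{n-2}{2}\Big)^2\int_\Sigma f^2,\qquad f\in C^1(\Sigma).
\]

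Next I would take $f=|A_\Sigma|$ in this inequality and combine it with Simons' identity on $\Sigma$, namely $\tfrac12\Delta_\Sigma|A_\Sigma|^2=|\nabla A_\Sigma|^2+(n-1-|A_\Sigma|^2)|A_\Sigma|^2$ (see \cite[Theorem 5.3.1]{Simons}), integrated over $\Sigma$; the integration by parts produces the boundary term $\tfrac12\int_{\partial\Sigma}\partial_\eta|A_\Sigma|^2$. Eliminating $\int_\Sigma|A_\Sigma|^4$ between the two relations, and then using the improved Kato inequality $|\nabla A_\Sigma|^2\ge(1+\tfrac{2}{n-1})|\nabla|A_\Sigma||^2$, the pointwise bound $|A_\Sigma(\eta,\eta)|\le|A_\Sigma|$, and the boundary-derivative estimate $|\partial_\eta|A_\Sigma|^2|\le\Lambda(n)\cot\theta\,|A_\Sigma|^3$ of \cite[Lemma C.2]{LiZhouZhu}, I expect to arrive at
\[
\frac{2}{n-1}\int_\Sigma|\nabla|A_\Sigma||^2+\Big(n-1-\big(\tfrac{n-2}{2}\big)^2\Big)\int_\Sigma|A_\Sigma|^2\ \le\ C(n)\,\cot\theta\int_{\partial\Sigma}|A_\Sigma|^3.
\]
The dimensional restriction enters precisely here: $n-1-(\tfrac{n-2}{2})^2=\tfrac14(-n^2+8n-8)$ is positive exactly for $2\le n\le 6$, so for $4\le n\le 6$ both coefficients on the left are positive.

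It remains to absorb the boundary integral. A trace inequality $\int_{\partial\Sigma}h\le C_{\mathrm{tr}}(\Sigma)\int_\Sigma(|\nabla h|+h)$ applied to $h=|A_\Sigma|^3$, together with Young's inequality, bounds $\int_{\partial\Sigma}|A_\Sigma|^3$ by a constant depending on $C_{\mathrm{tr}}(\Sigma)$ and $\|A_\Sigma\|_{L^\infty(\Sigma)}$ times $\int_\Sigma|\nabla|A_\Sigma||^2+\int_\Sigma|A_\Sigma|^2$; once $\cot\theta$ is small relative to these constants the right-hand side is absorbed into the left and we conclude $A_\Sigma\equiv 0$, i.e.\ $M$ is flat. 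The main difficulty, in contrast with the near-flat setting of Lemma \ref{lemm.trace}, is that here $\Sigma$ is a genuine minimal cone link, not close to a hyperplane, so to get a $\theta_1$ depending only on $n$ one needs uniform-in-$\theta$ control of $\|A_\Sigma\|_{L^\infty}$ and of the trace constant. I would extract this by contradiction: if $\theta_i\to\tfrac\pi2$ and the corresponding stable cones are non-flat, then the links $\Sigma_i^{n-1}$ with $n-1\le 5$ are stable, two-sided, and have uniformly bounded area ($\cH^{n-1}(\Sigma_i)\le\cH^{n-1}(S^n_+)$); Schoen--Simon--Yau interior curvature estimates and capillary boundary regularity (applicable since the contact angles converge to $\tfrac\pi2$) then yield uniform curvature bounds and smooth subconvergence $\Sigma_i\to\Sigma_\infty$ as manifolds with boundary, hence uniform bounds on $\|A_{\Sigma_i}\|_{L^\infty}$ and $C_{\mathrm{tr}}(\Sigma_i)$; for $i$ large the displayed inequality then forces $A_{\Sigma_i}\equiv 0$, a contradiction. (At $\theta=\tfrac\pi2$ one can alternatively reflect $\Sigma$ across $\partial S^n_+$ and invoke Simons' theorem on the doubled stable minimal cone; the argument above is the perturbative extension of that case.)
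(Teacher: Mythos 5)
Your first step is sound: plugging $f=|A_\Sigma|$ into the spectral inequality of Proposition \ref{prop.stable.spectral} and combining with the integrated Simons identity on the link, together with the improved Kato inequality and \cite[Lemma C.2]{LiZhouZhu}, does yield your displayed inequality, and it correctly locates the dimensional restriction $n\le 6$. But this is not how the paper proceeds, and the difference matters for the part of your argument that is incomplete. The paper never reduces to a compactness statement about links: it runs a Schoen--Simon--Yau-type argument directly on the cone $M$, testing the stability inequality \eqref{eq:stable.euclidean} with $f|A|^p$ for $p<1$ and radial weights $f$, using the modified Simons inequality of Lemma \ref{lemm.Simons.on.min.cone} (whose cone term $2\lambda|x|^{-2}|A|^2$ is what lets the radial weights reach $n\le 6$) and the trace inequality of Lemma \ref{lemma.trace.ineq}, whose constant is the explicit $1/\sin\theta$ coming from the constant contact angle via the vector field $-\varphi_R(x_1)\partial_1$. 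All constants are explicit and no curvature or area bounds are needed, which is why the paper can remark that $\theta_1(n)$ is computable.

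The genuine gap is your absorption step. Because you eliminated $\int_\Sigma|A_\Sigma|^4$ entirely, you need $\cot\theta$ to be small relative to $\|A_\Sigma\|_{L^\infty}$ and to a trace constant $C_{\mathrm{tr}}(\Sigma)$, and the compactness argument you propose to make these uniform in $\theta$ does not work as stated. First, stability of the cone $M$ in $\RR^{n+1}_+$ does not imply stability of the link $\Sigma$ in $S^n_+$: it only gives the much weaker spectral bound $-\big(\tfrac{n-2}{2}\big)^2$ of Proposition \ref{prop.stable.spectral} (stability of $\Sigma$ would require the lower bound $n-1$ for the same form), and in fact no closed two-sided stable minimal hypersurface of the round sphere exists, so the premise behind invoking Schoen--Simon--Yau curvature estimates for the $\Sigma_i$ fails. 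Second, the area bound $\cH^{n-1}(\Sigma_i)\le\cH^{n-1}(S^n_+)$ is a comparison argument that needs the cones to be minimizing, whereas Theorem \ref{thm:theta.close.pi/2} assumes only stability. Third, even granting curvature and area bounds, smooth subconvergence up to the boundary for capillary hypersurfaces with varying angles $\theta_i\to\tfrac\pi2$ is itself a substantial claim you would have to prove, and it would render $\theta_1$ non-explicit. A repair inside your framework is available and is essentially the paper's mechanism transplanted to the link: prove a trace inequality on $\Sigma$ with the explicit constant $C(n)/\sin\theta$ (use the tangential projection to $S^n$ of $-e_1$, whose inner product with the outward conormal equals $\sin\theta$ along $\partial\Sigma$, exactly as in Lemma \ref{lemma.trace.ineq}), and do not eliminate $\int_\Sigma|A_\Sigma|^4$ completely -- retain an $\varepsilon$-portion of it on the left (equivalently, work with $|A_\Sigma|^p$, $p<1$, as the paper does on $M$) so that after the trace inequality and Young's inequality the boundary contribution is absorbed with no $L^\infty$ curvature bound and no compactness.
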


\begin{rema}
	It will be clear from the proof that $\theta_1$ can be explicitly computed depending on $n$ (compare to \cite[Appendix C]{LiZhouZhu}).
\end{rema}

We first recall the following consequence of the Simons's equation on a minimal cone. 

\begin{lemm}\label{lemm.Simons.on.min.cone}
	Let $M^n\subset \RR^{n+1}$ be a minimal cone. Then for every $\lambda\in (0,1)$, we have that
	\begin{equation}\label{eq.modified.simons}
	\Delta(\tfrac 12 |A|^2) + |A|^4 \ge 2\lambda |x|^{-2} |A|^2 + \left[(1-\lambda)(1+\frac 2n)+\lambda\right] |\nabla |A||^2
	\end{equation}
	on $M$.
\end{lemm}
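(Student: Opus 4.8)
The plan is to derive \eqref{eq.modified.simons} from the classical Simons identity together with the improved Kato inequality valid for minimal hypersurfaces, and then use the homogeneity (cone structure) to convert the gradient term $|\nabla |x||^2 = 1$ into the Hardy-type weight $|x|^{-2}|A|^2$ after splitting $|A|^2$ via a weighted Cauchy--Schwarz. I would proceed as follows.

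\textbf{Step 1: Simons' identity and the refined Kato inequality.} Recall that on a minimal hypersurface $M^n \subset \RR^{n+1}$ one has the Simons identity
\[
\Delta(\tfrac12 |A|^2) = |\nabla A|^2 - |A|^4,
\]
so that $\Delta(\tfrac12|A|^2) + |A|^4 = |\nabla A|^2$. The refined Kato inequality for minimal hypersurfaces (a consequence of the Codazzi equations, see e.g.\ the computation behind Simons' theorem) gives
\[
|\nabla A|^2 \ge \left(1 + \tfrac{2}{n}\right)\big|\nabla |A|\big|^2.
\]
Combining these yields \eqref{eq.modified.simons} in the special case $\lambda = 0$; the content of the lemma is the interpolation towards the stronger weighted estimate as $\lambda$ increases, at the cost of a slightly weaker gradient constant.

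\textbf{Step 2: Exploiting the cone structure.} Because $M$ is a cone, $|A|$ is $(-1)$-homogeneous, i.e.\ $|A|(x) = |x|^{-1} \phi(x/|x|)$ for $\phi$ the norm of the second fundamental form of the link $\Sigma = M \cap S^{n-1}$. Writing $x = r\omega$ in polar coordinates on $M$, the radial derivative is $\partial_r |A| = -r^{-1}|A|$, so that $\big|\nabla |A|\big|^2 = |\partial_r|A||^2 + r^{-2}|\nabla_\Sigma \phi|^2 \ge r^{-2}|A|^2 = |x|^{-2}|A|^2$. More precisely, decomposing the gradient into its radial and spherical parts and using $|\partial_r |A||^2 = |x|^{-2}|A|^2$ exactly, one gets for any $\mu \in [0,1]$:
\[
\big|\nabla |A|\big|^2 \ge \mu\, |x|^{-2}|A|^2 + (1-\mu)\big|\nabla|A|\big|^2.
\]
Feeding this into the $\lambda = 0$ version from Step 1 — allocating a fraction of the $(1+\tfrac2n)|\nabla|A||^2$ term to the Hardy weight — produces exactly the stated inequality: writing $1 + \tfrac2n = [(1-\lambda)(1+\tfrac2n) + \lambda] + \lambda(\tfrac2n)$ and noting $\lambda(1+\tfrac2n)|\nabla|A||^2 \ge \lambda(1+\tfrac2n)|x|^{-2}|A|^2 \ge 2\lambda|x|^{-2}|A|^2$... — actually the cleanest bookkeeping is to split off $\lambda \cdot \big|\nabla|A|\big|^2$ from the $(1+\tfrac2n)$-term, bound that piece below by $\lambda|x|^{-2}|A|^2$ using the radial part alone, and observe the arithmetic $(1+\tfrac2n)\big|\nabla|A|\big|^2 = (1-\lambda)(1+\tfrac2n)\big|\nabla|A|\big|^2 + \lambda\big|\nabla|A|\big|^2 + \big[(1+\tfrac2n)\lambda - \lambda\big]\big|\nabla|A|\big|^2$, where the last coefficient is $\tfrac{2\lambda}{n}$; since $n \ge 1$... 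I need to double-check this gives the factor $2\lambda$ rather than $\tfrac{2\lambda}{n}$, which suggests one must also use the spherical part of the gradient or a sharper radial estimate. I would sort out the exact constant during write-up, but the mechanism is: Simons $+$ refined Kato $+$ radial homogeneity.

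\textbf{The main obstacle} I anticipate is getting the weight $2\lambda|x|^{-2}|A|^2$ sharp — the naive radial estimate only directly costs $\big|\nabla|A|\big|^2 \ge |x|^{-2}|A|^2$, yielding coefficient $\lambda$, not $2\lambda$. Recovering the extra factor of $2$ requires either (i) also using that on a minimal cone the spherical link satisfies an eigenvalue-type inequality relating $\int_\Sigma |\nabla_\Sigma \phi|^2$ to $\int_\Sigma \phi^2$, or more likely (ii) a pointwise trick: combining the full strength of the refined Kato constant $1 + \tfrac2n > 1$ with the radial bound, since $(1+\tfrac2n)\big|\nabla|A|\big|^2 \ge (1+\tfrac2n)|x|^{-2}|A|^2$ and $(1+\tfrac2n) \ge 2$ precisely when $n \le 2$ — so for general $n$ one genuinely needs the interpolation structure, keeping $(1-\lambda)(1+\tfrac2n)$ of the Kato term in reserve and only converting the remaining $\lambda(1+\tfrac2n) \ge 2\lambda$ worth (valid since $1 + \tfrac2n \ge 2$ fails for $n>2$...). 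This circularity tells me the correct argument must use a genuinely two-dimensional feature of the radial-spherical split: namely $\big|\nabla|A|\big|^2 = (\partial_r|A|)^2 + |x|^{-2}|\nabla_\Sigma\phi|^2$ with $(\partial_r|A|)^2 = |x|^{-2}|A|^2$ \emph{exactly}, combined with a Poincaré inequality on the link forced by minimality — this is the step I would spend the most care on, and I would consult the stability/Simons literature (e.g.\ \cite{Simons}, \cite{LiZhouZhu}) for the precise link inequality that closes the gap.
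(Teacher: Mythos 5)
There is a genuine gap, and you have in fact located it yourself: your mechanism only ever converts a portion of the scalar Kato term $\bigl(1+\tfrac 2n\bigr)\bigl|\nabla|A|\bigr|^2$ into the Hardy weight via $\bigl|\nabla|A|\bigr|^2\ge(\partial_r|A|)^2=|x|^{-2}|A|^2$, and bookkeeping then yields at best a coefficient $\lambda$ (or $\tfrac{2\lambda}{n}$, depending on how you split) in front of $|x|^{-2}|A|^2$, never the claimed $2\lambda$ — while simultaneously eating into the gradient coefficient by more than the allowed $\tfrac{2\lambda}{n}$. Neither of your proposed fixes closes this: a Poincar\'e/eigenvalue inequality on the link is an integral statement and cannot produce the \emph{pointwise} inequality asserted in the lemma, and the ``$(1+\tfrac 2n)\ge 2$'' route fails exactly in the dimensions of interest, as you note. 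The missing ingredient is a cone estimate at the level of the full tensor $\nabla A$, not of the scalar $|A|$: on a minimal cone one has the pointwise inequality
\begin{equation*}
|\nabla A|^2-\bigl|\nabla |A|\bigr|^2 \;=\;\sum_{i,j,k}A_{ij,k}^2-\sum_k |A|^{-2}\Bigl(\sum_{i,j}A_{ij}A_{ij,k}\Bigr)^2\;\ge\; 2\,|x|^{-2}|A|^2
\end{equation*}
(see \cite[B.9]{LeonSimon}). The factor $2$ comes from the components of $\nabla A$ carrying a radial index: by Codazzi symmetry and the $(-1)$-homogeneity of $A$ these all equal $-r^{-1}A_{ij}$, and there are two such families beyond the purely radial derivative already accounted for in $\bigl|\nabla|A|\bigr|^2$. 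This information is invisible if you only track $\nabla|A|$, which is why your radial-scalar argument cannot recover it.

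With that inequality in hand, the paper's argument is exactly the interpolation you set up, but performed on $|\nabla A|^2$ rather than on $\bigl|\nabla|A|\bigr|^2$: write $\Delta(\tfrac12|A|^2)+|A|^4=|\nabla A|^2$ (Simons), split
\begin{equation*}
|\nabla A|^2=\lambda\Bigl(|\nabla A|^2-\bigl|\nabla|A|\bigr|^2\Bigr)+(1-\lambda)\,|\nabla A|^2+\lambda\,\bigl|\nabla|A|\bigr|^2,
\end{equation*}
apply the cone estimate above to the first bracket and the refined Kato inequality $|\nabla A|^2\ge(1+\tfrac2n)\bigl|\nabla|A|\bigr|^2$ (e.g.\ \cite[(2.22)]{ColdingMinicozzi}) to the second, and keep the third as is; this gives precisely $2\lambda|x|^{-2}|A|^2+\bigl[(1-\lambda)(1+\tfrac2n)+\lambda\bigr]\bigl|\nabla|A|\bigr|^2$. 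Your Step 1 is correct and coincides with the paper, but as written the proposal does not prove the lemma: you must replace the scalar radial estimate by the tensorial excess inequality on cones.
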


\begin{proof}
	Fix a point $p\in M$ and take normal coordinates $\{x_i\}_{i=1}^n$ around $p$, then the Simons equation gives
	\[\Delta(\tfrac12 |A|^2) + |A|^4 = \sum_{i,j,k=1}^n A_{ij,k}^2.\]
	We write the right hand side of the above as:
	\begin{multline*}
	\Delta(\tfrac 12 |A|^2) +|A|^4 = \lambda \left(\sum_{i,j,k=1}^n A_{ij,k}^2 - \sum_{i,j,k=1}^n |A|^{-2} (A_{ij}A_{ij,k})^2\right) \\
	+ (1-\lambda) \sum A_{ij,k}^2 + \lambda \sum |A|^{-2} (A_{ij}A_{ij,k})^2.
	\end{multline*}
	Note that $\sum |A|^{-2} (A_{ij}A_{ij,k})^2 = |\nabla |A||^2$. On one hand, since $M$ is a cone, we have that (see \cite[B.9]{LeonSimon})
	\[\sum_{i,j,k=1}^n A_{ij,k}^2 - \sum_{i,j,k=1}^n |A|^{-2} (A_{ij}A_{ij,k})^2 \ge 2 |x|^{-2} |A(x)|^2.\]
	On the other hand, we have the following Kato type inequality (see \cite[(2.22)]{ColdingMinicozzi}):
	\[\sum A_{ij,k}^2\ge (1+\frac 2n) |\nabla |A||^2.\]
	\eqref{eq.modified.simons} follows from combining the above two estimates.
\end{proof}

Next, we establish the following trace type inequality on $M$. This was proven in \cite[Lemma C.3]{LiZhouZhu}. We include it here for completeness. (Compare to Lemma \ref{lem:rigidity}, where a similar trace type inequality holds when $\theta$ is close to $0$).

\begin{lemm}\label{lemma.trace.ineq}
	Suppose $M^n\subset \RR^{n+1}_+$ is a smoothly immersed hypersurface, meeting $\partial \RR^{n+1}_+$ at constant angle $\theta$. Then for any $u\in W^{1,1}(M)$, we have
	\begin{equation}\label{eq.trace}
	\int_{\partial M} u \le \frac{1}{\sin \theta}\int_M |\nabla u| + |H_M u|.
	\end{equation}
\end{lemm}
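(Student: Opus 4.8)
The plan is to prove the trace inequality \eqref{eq.trace} by the standard divergence-theorem argument, using the constant-angle boundary condition to control the boundary term. First I would pick a vector field on $\RR^{n+1}$ that restricts, along $\partial M$, to a multiple of the conormal $\eta$ pointing out of $\RR^{n+1}_+$. Since $M$ meets $\partial\RR^{n+1}_+=\{x_1=0\}$ at constant angle $\theta$, the outward unit conormal $\eta$ of $\partial M$ in $M$ decomposes (in the notation of Section 2) as $\eta=\cos\theta\,\bar\eta+\sin\theta\,\bar\nu$, where $\bar\eta=-e_1$ is the outward unit normal of $\partial\RR^{n+1}_+$. Hence $\langle \eta, -e_1\rangle = \cos\theta$ is not what we want directly; instead observe that the tangential-to-$M$ part of the constant vector field $e_1$ has, along $\partial M$, inner product $\sin\theta$ with $\eta$. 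Concretely, writing $e_1^{\top}$ for the projection of $e_1$ onto $TM$, one checks $\langle e_1^\top,\eta\rangle = \sin\theta$ on $\partial M$ (the component of $e_1$ along $\eta$ is $\sin\theta$, since $e_1\perp \bar\nu$ and $\langle e_1,\bar\eta\rangle = -1$... ) — more carefully, decompose $e_1 = -\cos\theta\,\nu - \sin\theta\,\eta$ on $\partial M$ using that $\{\nu,\eta\}$ spans the normal-plus-conormal directions and $e_1$ lies in it, so the tangential part $e_1^\top = -\sin\theta\,\eta$ there.

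Next I would apply the tangential divergence theorem on $M$ to the vector field $X = -u\, e_1^\top$ (for $u\ge 0$ smooth, then approximate):
\begin{equation*}
\int_M \Div_M(-u\,e_1^\top)\,d\cH^n = \int_{\partial M} \langle -u\,e_1^\top,\eta\rangle\,d\cH^{n-1} = \sin\theta\int_{\partial M} u\,d\cH^{n-1}.
\end{equation*}
On the left, $\Div_M(-u\,e_1^\top) = -\langle \nabla^M u, e_1^\top\rangle - u\,\Div_M(e_1^\top)$. Since $e_1$ is constant in $\RR^{n+1}$, its tangential divergence along $M$ is the mean-curvature term: $\Div_M(e_1^\top) = \Div_M e_1 - \langle e_1,\nu\rangle\, H_M$-type identity — precisely $\Div_M(e_1) = -H_M\langle e_1,\nu\rangle$ where $H_M = \Div_M\nu$, so $\Div_M(e_1^\top) = \Div_M(e_1) = -H_M\langle e_1,\nu\rangle$ (the normal part $\langle e_1,\nu\rangle\nu$ has tangential divergence $-H_M\langle e_1,\nu\rangle$ plus $\langle e_1,\nu\rangle\Div_M\nu$, which cancels up to sign — I would just quote the first variation identity $\Div_M(e_1^\top) = -H_M\langle \nu,e_1\rangle$). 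Therefore
\begin{equation*}
\sin\theta\int_{\partial M} u = \int_M \big(-\langle\nabla^M u,e_1^\top\rangle + u\,H_M\langle\nu,e_1\rangle\big)\,d\cH^n \le \int_M |\nabla^M u| + |H_M u|,
\end{equation*}
using $|e_1^\top|\le 1$ and $|\langle\nu,e_1\rangle|\le 1$. Dividing by $\sin\theta$ gives \eqref{eq.trace} for $u\ge 0$; the general $u\in W^{1,1}(M)$ case follows by writing $u = u^+ - u^-$ (or applying the estimate to $|u|$ and using $|\nabla|u||\le|\nabla u|$) and a density argument.

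The only genuine subtlety — the "main obstacle" — is bookkeeping the sign/decomposition of $e_1$ along $\partial M$ into its normal ($\nu$) and conormal ($\eta$) components so that the boundary term comes out as exactly $\sin\theta\int_{\partial M}u$ rather than, say, $\cos\theta$ or a combination; this is where the constant-angle hypothesis $\langle\nu,\bar\eta\rangle=\cos\theta$ enters, forcing $e_1^\top = \pm\sin\theta\,\eta$ on $\partial M$. Everything else is the routine tangential divergence theorem plus the first-variation identity $\Div_M(e_1^\top)=-H_M\langle\nu,e_1\rangle$ and the crude bounds $|e_1^\top|,|\langle\nu,e_1\rangle|\le 1$. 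Since the statement already appears as \cite[Lemma C.3]{LiZhouZhu}, I would keep the write-up short, matching the level of detail of Lemma \ref{lemm.trace} earlier in the paper.
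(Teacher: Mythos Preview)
Your proposal is correct and takes essentially the same route as the paper: both apply the tangential divergence theorem to the vector field $-u\,e_1^\top$, use the constant-angle condition to get $\langle\eta,-e_1\rangle=\sin\theta$ on $\partial M$, and then bound the interior terms pointwise by $|\nabla u|+|H_M u|$. The only cosmetic difference is that the paper inserts a cutoff $\varphi_R(x_1)$ in the $x_1$-direction (writing $\xi=-\varphi_R(x_1)\partial_1$) and sends $R\to\infty$ to justify the divergence theorem on a possibly noncompact $M$, whereas you defer this to a density/approximation argument.
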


\begin{proof}
	For $R>0$, let $\varphi_R: [0,\infty)\to \RR$ be a smooth function such that $\varphi_R(t)=1$ when $t\le R$, $\varphi_R(t)=0$ when $t\ge 2R$, and $|\varphi_R'|\le \tfrac 2R$. Consider the vector field $\xi = -\varphi_R(x_1)\partial_1$. By assumption, we have that $\eta\cdot \xi = \sin\theta$ along $\partial M$. Therefore,
	\begin{align*}
	\int_{\partial M} u  &= \frac{1}{\sin\theta} \int_{\partial M} u\eta\cdot\xi\\
	&= \frac{1}{\sin\theta} \int_M\Div_M (u \xi^T)\le \frac{1}{\sin\theta} \int_M |\nabla u||\xi| + u\Div_M \xi + |u H_M| \\
	&\le \frac{1}{\sin \theta} \int_M |\nabla u| + \frac 2R u + |H_M u|.
	\end{align*}
	Sending $R\to \infty$ gives the result.
\end{proof}

\begin{proof}[Proof of Theorem \ref{thm:theta.close.pi/2}]
	For $p\in (\tfrac 12,1)$ to be chosen later, by \eqref{eq.modified.simons}, we have that (here and after we use $r=|x|$)
	
	\begin{align*}
	\frac12 \Delta (|A|^{2p}) &= p |A|^{2p-1} \Delta |A| + p(2p-1) |A|^{2p-2} |\nabla |A||^2 \\
	&\ge p |A|^{2p-1} \left[- |A|^3 + 2\lambda r^{-2} |A| + |A|^{-1}((1-\lambda)\tfrac 2n)|\nabla |A||^2\right]\\
	&\quad +p(2p-1)|A|^{2p-2}|\nabla |A||^2\\
	&=p(2p-2+ \lambda + (1-\lambda)(1+\tfrac 2n)) |A|^{2p-2} |\nabla |A||^2 - p |A|^{2p+2} + 2p\lambda |A|^{2p}r^{-2} 
	\end{align*}
	
	For a compactly supported radial Lipschitz function $f=f(r)$, plug $f|A|^p$ into \eqref{eq:stable.euclidean} and obtain:
	\begin{equation}\label{eq.f|A|^q.in.simons}
	\begin{split}
	\cot\theta &\int_{\partial M} A(\eta,\eta) |A|^{2p}f^2 + \int_M |A|^{2p+2} f^2 \le \int_M |\nabla (f|A|^p)|^2 \\
	& = \int_M p^2 |A|^{2p-2} |\nabla |A||^2 f^2 + |A|^{2p} |\nabla f|^2 + \frac12 \langle \nabla |A|^{2p}, \nabla (f^2)\rangle\\
	&= \int_M p^2 |A|^{2p-2} |\nabla |A||^2 f^2 + |A|^{2p} |\nabla f|^2 -\int_M f^2 \frac 12 \Delta (|A|^{2p}) + \int_{\partial M} f^2 \frac{\partial |A|^{2p}}{\partial \eta}.
	\end{split}
	\end{equation}
	
	We note that each integral on the right hand side of the above inequality is finite. Here one only needs to check the integral involving $|A^{2p-2}||\nabla |A||^2$ is finite. To see this, recall that the Simons equation implies that
	\[c(n)|\nabla |A||^2 \le |A|\Delta |A| + |A|^4,\]
	for some $c(n)>0$. Thus, we have that
	\[c(n)|A|^{2p-2} |\nabla |A||^2 \le |A|^{2p-1}\Delta |A| + |A|^{2p+3}.\]
	Now both terms on the right hand side are integrable on $M\cap S^n(1)$, and hence $|A|^{2p-2} |\nabla |A||^2 f^2$ is integrable on $M$ if $f$ is compactly supported.
	
	We treat boundary term in \eqref{eq.f|A|^q.in.simons} first. By \cite[Lemma C.2]{LiZhouZhu}, we have that
	\[\left|\frac{\partial |A|}{\partial \eta}\right| \le 3\sqrt{n-1} |\cot\theta| |A|^2.\]
	Thus, using \eqref{eq.trace} we estimate
	\begin{align*}
	\left|\cot\theta \int_{\partial M} \right.& \left.A(\eta,\eta)|A|^{2p}f^2 - \int_{\partial M} f^2 \frac{|A|^{2p}}{\eta}\right| \\
	&\le c(n) |\cot \theta| \int_{\partial M} f^2 |A|^{2p+1}\\
	&\le c(n) \left|\frac{\cot\theta}{\sin\theta}\right| \int_M |\nabla (f^2 |A|^{2p+1})|\\
	&\le c(n) \left|\frac{\cot\theta}{\sin\theta}\right| \int_M f^2 |\nabla |A||^2 |A|^{2p-2} + f^2 |A|^{2p+2} + |\nabla f|^2 |A|^{2p}. 
	\end{align*}
	Here $c(n)$ is a constant that may change from line to line, but depends only on $n$. We have used that $p \leq 1$ and the Young's inequality $2ab<a^2 + b^2$ above.
	
	Plug this into \eqref{eq.f|A|^q.in.simons}, and use \eqref{eq.modified.simons}, we have
	\begin{align}
	0 & \leq \int_M \left( (p-1) + c(n) \left| \frac{\cot\theta}{\sin\theta} \right| \right) |A|^{2p+2} f^2 \nonumber \\
	&+ \int_M \left( p^2 - p ( (1-\lambda)(1+2/n) + \lambda + 2(p-1)) + c(n) \left| \frac{\cot\theta}{\sin\theta} \right|\right) |A|^{2p-2} |\nabla |A||^2 f^2 \nonumber \\
	& + \int_M \left(1 + c(n) \left| \frac{\cot\theta}{\sin\theta} \right| \right) |A|^{2p} |\nabla f|^2 - 2\lambda p |A|^{2p} f^2/r^2 \nonumber \\
	&=: I + II + III. \label{eq.simons.for.rigidity}
	\end{align}
	
	For $\eps>0$ to be chosen later, set 
	\[f=\begin{cases}
	r^{1+\eps} \quad r\le 1 \\ r^{2-n/2-\eps} \quad r>1.
	\end{cases} \]
	Though $f$ is not compactly supported, $\int_0^\infty r^{n-3-2p}f(r) dr$ is finite (since $p \in [1/2, 1]$ and $n \geq 2$), and thus the right hand side of \eqref{eq.f|A|^q.in.simons} is finite with this choice of $f$.  Morever, we see that $III$ becomes
	\begin{align*}
	III &= \int_{M \cap \{ r \leq 1\}} \left( (1 + c(n) \left| \frac{\cot\theta}{\sin\theta} \right|)(1+\eps)^2 - 2\lambda p \right) |A|^{2p} f^2/r^2 \\
	&\quad+ \int_{M \cap \{ r \geq 1\}} \left( (1+c(n)\left|\frac{\cot\theta}{\sin\theta}\right|)(2-n/2-\eps)^2 - 2\lambda p\right) |A|^{2p} f^2/r^2 .
	\end{align*}
	
	If we can make the coefficients in integrals I, II, II all $< 0$, it will follow that $A \equiv 0$, proving our Theorem.  We therefore seek to find $p \in (1/2, 1), \eps \in (0, 1), \lambda \in (0, 1), \theta_1 > 0$ to ensure the following inqualities hold whenever $\theta \in (\pi/2 - \theta_1, \pi/2]$ and $2 \leq n \leq 6$:
	\begin{align}
	& p-1 + c(n) \left| \frac{\cot\theta}{\sin\theta}\right| < 0 \\
	& p^2 - p( (1-\lambda)(1+2/n) + \lambda + 2p - 2) + c(n) \left| \frac{\cot\theta}{\sin\theta} \right| < 0 \\
	& (1+c(n) \left| \frac{\cot\theta}{\sin\theta} \right|) (1+\eps)^2 - 2\lambda p < 0 \\
	& (1+c(n) \left| \frac{\cot\theta}{\sin\theta} \right| ) (2-n/2-\eps)^2 - 2\lambda p < 0.
	\end{align}
	
	One can readily check that when $\theta = \pi/2$ (and $2 \leq n \leq 6$), then taking $p = 1-\eps$, $\lambda = 1 - 10\eps$, $\eps = 1/100$ will make all of the above inequalities true.  Therefore by continuity there is a $\theta_1 > 0$ so that all inequalities remain sharp if $\theta \in (\pi/2-\theta_1, \pi/2]$.  This finishes the proof. 
\end{proof}


\bibliography{bib}
\bibliographystyle{amsplain}
\end{document}